\documentclass[11pt]{amsart}
\usepackage{geometry,color,amsmath,amsfonts,amssymb,amscd,amsthm,amsbsy,upref,hyperref}
\geometry{ margin=1in}
\linespread{1.2}


\numberwithin{equation}{section}

\newcommand{\N}{\mathbb{N}}

\newcommand{\R}{\mathbb{R}}
\newcommand{\C}{\mathbb{C}}

\newcommand{\vp}{\varepsilon}

\newtheorem{thm}{Theorem}[section]
\newtheorem{cor}[thm]{Corollary}
\newtheorem{prop}[thm]{Proposition}
\newtheorem{lem}[thm]{Lemma}
\newtheorem{ex}[thm]{Example}
\newtheorem{defn}[thm]{Definition}

\begin{document}

\title{Stable phase retrieval and perturbations of frames.}

\author{ Wedad Alharbi }\author{ Daniel Freeman}\author{ Dorsa Ghoreishi}\author{Claire Lois}\author{Shanea Sebastian}
\address{Department of Mathematics and Statistics\\
St Louis University\\
St Louis MO 63103  USA
} \email{wedad.alharbi@slu.edu}
\email{daniel.freeman@slu.edu}
\email{dorsa.ghoreishi@slu.edu}
\email{claire.lois@slu.edu}
\email{shanea.sebastian@slu.edu}

\begin{abstract}
    A frame $(x_j)_{j\in J}$ for a Hilbert space $H$ is said to do phase retrieval if for all distinct vectors $x,y\in H$  the magnitude of the frame coefficients $(|\langle x, x_j\rangle|)_{j\in J}$ and $(|\langle y, x_j\rangle|)_{j\in J}$ distinguish $x$ from $y$ (up to a unimodular scalar).  A frame which does phase retrieval is said to do $C$-stable phase retrieval if the recovery of any vector $x\in H$ from the magnitude of the frame coefficients is $C$-Lipschitz.  
    It is known that if a frame does stable phase retrieval then any sufficiently small perturbation of the frame vectors will do stable phase retrieval, though with a slightly worse stability constant.
    We provide new quantitative bounds on how the stability constant for phase retrieval is affected by a small perturbation of the frame vectors.  These bounds are significant in that they are independent of the dimension of the Hilbert space and the number of vectors in the frame.

\end{abstract}

\thanks{2020 \textit{Mathematics Subject Classification}: 42C15, 46T20,  49N45}

\thanks{The included research was conducted during the capstone course and summer research program on frame theory at St Louis University supported by NSF grant 2154931 where the first, fourth, and fifth authors were participants and the second and third authors were mentors.  The second author was also supported by grant 706481 from the Simons Foundation. }

\maketitle

\section{Introduction}

Frames, like ortho-normal bases, give a continuous, linear, and stable reconstruction formula for vectors in a Hilbert space.
The distinction between frames and bases is that frames allow for redundancy.  That is, the coefficicients used for reconstruction with a frame may be non-unique, and a frame for a finite dimensional Hilbert space can have more vectors than the dimension.  Frames have many applications in signal processing, physics, and engineering where one wishes to analyze or reconstruct a vector from a collection of linear measurements.  
However, in  some situations such as X-ray crystallography and coherent diffraction imaging, one is only able to obtain the magnitude of each linear measurement.  This loss of linearity makes the recovery of the vector much more difficult.
As we have lost the phase of each measurement, the recovery of a vector from a collection of magnitudes of linear measurements is aptly named phase retrieval.

The importance of phase retrieval in applications has driven a significant amount of research on the mathematics of phase retrieval in frame theory, and we recommend \cite{GKR} and \cite{FaS} for surveys on the topic.  Let $H$ be a Hilbert space.  A collection of vectors $(x_j)_{j\in J}\subseteq H$ is called a frame of $H$ if there are uniform constants $B\geq A>0$ called the {\em frame bounds}, such that 
\begin{equation}\label{E:frame}
    A\|x\|^2\leq \sum_{j\in J}|\langle x,x_j\rangle|^2\leq B\|x\|^2\hspace{.5cm}\textrm{ for all }x\in H.
\end{equation}
A frame is called {\em tight} if the optimal frame bounds satisfy $A=B$, and a frame is called {\em Parseval} if  $A=B=1$. The {\em analysis operator} of a frame $(x_j)_{j\in J}$ of $H$ is the map $\Theta:H\rightarrow \ell_2(J)$ given by $\Theta(x)=(\langle x,x_j\rangle)_{j\in J}$.  That is, the analysis operator maps a vector to its sequence of frame coefficients.

Recall that the goal of phase retrieval is to recover a vector (up to a unimodular scalar) from the magnitude of its frame coefficients.  This can be nicely expressed in terms of the analysis operator.  We say that a frame $(x_j)_{j\in J}$ of a Hilbert space $H$ with analysis operator $\Theta:H\rightarrow \ell_2(J)$ does {\em phase retrieval } if whenever $x,y\in H$ are such that $|\Theta x|=|\Theta y|$ we have that $x=\lambda y$ for some $|\lambda|=1$.
We may define an equivalence relation on $H$ by $x\sim y$ if $x=\lambda y$ for some $|\lambda|=1$. 
Then, $(x_j)_{j\in J}$ does phase retrieval is equivalent to $|\Theta|:H/\!\!\!\sim\,\rightarrow \ell_2(J)$ is one-to-one.  
Any application of phase retrieval will involve some error, and thus it is important that  phase retrieval not only be possible but that it also be stable.  We say that $(x_j)_{j\in J}$ does {\em $C$-stable phase retrieval } if the recovery of $[x]_\sim\in H/\!\!\!\sim$ from $|\Theta x|\in\ell_2(J)$ is $C$-Lipschitz. 
That is, a frame $(x_j)_{j\in J}$  of a Hilbert space $H$ with analysis operator $\Theta:H\rightarrow\ell_2(J)$ does $C$-stable phase retrieval  if
\begin{equation}\label{E:Cstable}
\min_{|\lambda|=1} \|x- \lambda y\|_H \leq C \big\||\Theta x|- |\Theta y|\big\|_{\ell_2(J)}=C\Big(\sum_{j\in J}\big||\langle x,x_j\rangle|-|\langle y,x_j\rangle|\big|^2\Big)^{\frac{1}{2}}\hspace{.15cm}\textrm{ for all }x,y\in H.
\end{equation}
Let $(x_j)_{j\in J}$ be a frame of a Hilbert space $H$ with optimal lower frame bound $A$ and analysis operator $\Theta:H\rightarrow \ell_2(J)$.  We have by  \eqref{E:frame} that the recovery of $x\in H$ from the frame coefficients $\Theta x$ is $A^{-\frac{1}{2}}$-Lipschitz.  Thus, if $(x_j)_{j\in J}$ does $C$-stable phase retrieval then $C\geq A^{-\frac{1}{2}}$.

In both theory and applications, one often doesn't work with the frame $(x_j)_{j\in J}$ itself, but instead with a frame $(y_j)_{j\in J}$ which is a small perturbation of $(x_j)_{j\in J}$.  
For example, the Fourier transform and Gabor transform are important tools in phase retrieval, but any implementation would require them to be discretized. 
As another example, it can be very difficult to explicitly construct frames which satisfy some property exactly, and one can instead construct frames which approximate that property and then perturb them \cite{CC}\cite{FOSZ}\cite{KLLR}.  Given a frame  $(x_j)_{j\in J}$ for some Hilbert space $H$,
the following theorem of Christensen \cite{C} provides frame bounds for any perturbation $(y_j)_{j\in J}$ of $(x_j)_{j\in J}$ in terms of the frame bounds for $(x_j)_{j\in J}$ and how close $(y_j)_{j\in J}$ is to $(x_j)_{j\in J}$.

\begin{thm}[\cite{C}]\label{T:C intro}
Let $(x_j)_{j\in J}$ be a frame of a Hilbert space $H$ with frame bounds $B\geq A>0$.  Let $A>\vp>0$ and  $(y_j)_{j\in J}\subseteq H$ such that 
$\sum_{j\in J}\|x_j-y_j\|^2<\vp$.  Then, $(y_j)_{j\in J}$ is a frame of $H$ with upper frame bound $B(1+\sqrt\frac{\vp}{B})^2$ and lower frame bound $A(1-\sqrt\frac{\vp}{A})^2$.
\end{thm}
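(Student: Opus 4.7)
The natural approach is to work at the level of analysis operators rather than trying to estimate sums of inner products directly. Let $\Theta_x\colon H\to \ell_2(J)$ denote the analysis operator of $(x_j)_{j\in J}$ and $\Theta_y$ the (a priori merely formal) analysis operator of $(y_j)_{j\in J}$. The frame bound inequality \eqref{E:frame} says exactly that $\sqrt{A}\,\|x\|\leq \|\Theta_x x\|\leq \sqrt{B}\,\|x\|$, so the task reduces to controlling $\|\Theta_y x\|$ from above and below in terms of $\|\Theta_x x\|$ and the perturbation error.

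The key step is to estimate the operator $D:=\Theta_x-\Theta_y$, which is given by $Dx=(\langle x, x_j-y_j\rangle)_{j\in J}$. By Cauchy--Schwarz applied termwise and the assumption $\sum_j\|x_j-y_j\|^2<\vp$, one obtains
\begin{equation*}
    \|Dx\|_{\ell_2(J)}^2=\sum_{j\in J}|\langle x, x_j-y_j\rangle|^2\leq \|x\|^2\sum_{j\in J}\|x_j-y_j\|^2<\vp\,\|x\|^2,
\end{equation*}
so $D$ is a bounded operator with $\|D\|\leq\sqrt{\vp}$. In particular $\Theta_y=\Theta_x-D$ is a well-defined bounded linear map from $H$ into $\ell_2(J)$.

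With that in hand the two frame bounds follow from the triangle inequality in $\ell_2(J)$. For the upper bound,
\begin{equation*}
    \|\Theta_y x\|\leq \|\Theta_x x\|+\|Dx\|\leq \sqrt{B}\,\|x\|+\sqrt{\vp}\,\|x\|=\sqrt{B}\bigl(1+\sqrt{\vp/B}\bigr)\|x\|,
\end{equation*}
and squaring yields exactly $B(1+\sqrt{\vp/B})^2$ as the upper frame bound for $(y_j)_{j\in J}$. For the lower bound,
\begin{equation*}
    \|\Theta_y x\|\geq \|\Theta_x x\|-\|Dx\|\geq \sqrt{A}\,\|x\|-\sqrt{\vp}\,\|x\|=\sqrt{A}\bigl(1-\sqrt{\vp/A}\bigr)\|x\|,
\end{equation*}
and the hypothesis $\vp<A$ ensures the right-hand side is nonnegative, so squaring gives the claimed lower frame bound $A(1-\sqrt{\vp/A})^2$.

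There is essentially no obstacle here: the whole argument is a single application of Cauchy--Schwarz to identify $D=\Theta_x-\Theta_y$ as a small perturbation in operator norm, followed by triangle inequalities. The only subtle point, and what justifies the hypothesis $\vp<A$ in the statement, is making sure the lower estimate $\sqrt{A}-\sqrt{\vp}$ is positive before squaring so that the resulting quantity is a genuine positive lower frame bound rather than a vacuous one.
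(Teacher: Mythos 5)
Your proof is correct. The paper does not actually prove this statement --- it is Christensen's perturbation theorem, imported from \cite{C} --- so there is no internal argument to compare against; your proof (termwise Cauchy--Schwarz to identify $D=\Theta_x-\Theta_y$ as a bounded operator with $\|D\|\leq\sqrt{\vp}$, then the triangle inequality above and below) is the standard one and is essentially Christensen's original argument. You also invoke the hypothesis $\vp<A$ in exactly the right place, namely to ensure $\sqrt{A}-\sqrt{\vp}>0$ before squaring, so that the lower bound is a genuine frame bound.
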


Christensen's perturbation theorem is of fundamental importance in frame theory, and it is natural to consider how the  stability of phase retrieval is affected by perturbations of frame vectors.
In \cite{B}, Balan proves that phase retrieval in finite dimensional Hilbert spaces is stable under small perturbations and provides a bound for $\vp>0$ in terms of properties of the frame $(x_j)_{j\in J}$.
However, a different method of measuring stability is considered in  \cite{B}. 
We prove the following perturbation theorem for phase retrieval which is analogous to Christensen's perturbation theorem for frame bounds.

\begin{thm}\label{T:intro}
Let $(x_j)_{j\in J}$ be a frame of a finite dimensional Hilbert space $H$ with frame bounds $B\geq A>0$ which does $C$-stable phase retrieval.  Let $\vp>0$ satisfy $\vp< 2^{-4}C^{-4}B^{-1}$ and let $(y_j)_{j\in J}\subseteq H$  such that 
$\sum_{j\in J}\|x_j-y_j\|^2<\vp$.  Then, $(y_j)_{j\in J}$ is a frame of $H$ with upper frame bound $B(1+\sqrt\frac{\vp}{B})^2$ and lower frame bound $A(1-\sqrt\frac{\vp}{A})^2$ which does $C(1-4C^2\sqrt{\vp B})^{-\frac{1}{2}}$-stable phase retrieval for $H$.
\end{thm}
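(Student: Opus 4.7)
The plan is to apply Christensen's Theorem~\ref{T:C intro} to obtain that $(y_j)_{j\in J}$ is a frame with the stated upper and lower frame bounds, and then to argue separately about the stability constant for phase retrieval.  Throughout, let $\Theta$ and $\Psi$ denote the analysis operators of $(x_j)_{j\in J}$ and $(y_j)_{j\in J}$ respectively, and set $E=\Theta-\Psi$.

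The key pointwise perturbation estimate I would establish first is that for every $z\in H$,
\[
\big\||\Theta z|-|\Psi z|\big\|_{\ell_2(J)}^2 = \sum_{j\in J}\big(|\langle z,x_j\rangle|-|\langle z,y_j\rangle|\big)^2 \leq \sum_{j\in J}|\langle z,x_j-y_j\rangle|^2 \leq \vp\|z\|^2,
\]
where the first inequality uses $||a|-|b||\leq|a-b|$ pointwise and the second uses Cauchy--Schwarz together with the hypothesis $\sum_{j\in J}\|x_j-y_j\|^2<\vp$.  Fix $x,y\in H$, write $T=\big\||\Theta x|-|\Theta y|\big\|$, $P=\big\||\Psi x|-|\Psi y|\big\|$, and $M=\min_{|\lambda|=1}\|x-\lambda y\|$.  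By replacing $y$ with $\lambda_0 y$ for the $|\lambda_0|=1$ that achieves the minimum (which leaves $P$ unchanged), I may assume $M=\|x-y\|$.

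The aim is to show the quantitative comparison
\[
T^2-P^2 \leq 4\sqrt{\vp B}\,M^2. \qquad (*)
\]
Combining $(*)$ with $M\leq CT$ (the $C$-stability of $(x_j)_{j\in J}$) gives $M^2\leq C^2T^2\leq C^2 P^2+4C^2\sqrt{\vp B}\,M^2$, and rearranging (using $\vp<2^{-4}C^{-4}B^{-1}$ so that $1-4C^2\sqrt{\vp B}>0$) yields exactly $M\leq C(1-4C^2\sqrt{\vp B})^{-1/2}P=C'P$, which is the desired $C'$-stable phase retrieval inequality for $(y_j)_{j\in J}$.

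To prove $(*)$, I would factor $T^2-P^2=\sum_{j\in J}(a_j-b_j)(a_j+b_j)$ where $a_j=|\Theta x|(j)-|\Theta y|(j)$ and $b_j=|\Psi x|(j)-|\Psi y|(j)$, and bound each factor by Cauchy--Schwarz.  The factor $\|a+b\|$ is controlled by applying $||a|-|b||\leq|a-b|$ to each term together with the upper frame bounds on $(x_j)$ and $(y_j)$ acting on $x-y$, giving $\|a+b\|\lesssim 2\sqrt{B}\,M$.  The factor $\|a-b\|$ is the difference $(|\Theta x|-|\Psi x|)-(|\Theta y|-|\Psi y|)$, and here I would exploit the pointwise estimate together with the frame structure on $x-y$ to obtain a bound of the form $\|a-b\|\lesssim 2\sqrt{\vp}\,M$; combining these yields $(*)$.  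The main obstacle will be this last bound on $\|a-b\|$: the naive application of $|a_j-b_j|\leq|\langle x,x_j-y_j\rangle|+|\langle y,x_j-y_j\rangle|$ only produces $\sqrt{\vp}(\|x\|+\|y\|)$ on the right, so the delicate step is to arrange a Cauchy--Schwarz that pairs the perturbation factors $x_j-y_j$ against the frame factors $x_j$ or $y_j$ applied to the difference $x-y$ (rather than to $x$ and $y$ separately), thereby converting $\|x\|+\|y\|$ into $M$ at the cost of a factor of $\sqrt{B}$.
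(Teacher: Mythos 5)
Your reduction of the theorem to the inequality $T^2-P^2\leq 4\sqrt{\vp B}\,M^2$ is the right shape (the paper's proof establishes essentially this same comparison), and the Christensen part and the pointwise estimate $\big\||\Theta z|-|\Psi z|\big\|\leq\sqrt{\vp}\,\|z\|$ are fine. But there is a genuine gap: the inequality $(*)$ is \emph{false} for general pairs $x,y$, and the ``delicate step'' you defer --- upgrading $\|a-b\|\lesssim\sqrt{\vp}(\|x\|+\|y\|)$ to $\|a-b\|\lesssim\sqrt{\vp}\,M$ --- cannot be carried out. The obstruction is that $a-b$ depends on $x$ and $y$ individually through the absolute values, not only on $x-\lambda_0 y$. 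Concretely, take $x=(1,\delta)$, $y=(1,-\delta)$ in $\C^2$, so $M=2\delta$, and suppose the frame contains $v=(\delta h,h)$ which is perturbed to $w=(0,h)$ with all other frame vectors unchanged, so $\vp\approx\delta^2h^2$. Then $\langle x,v\rangle=2\delta h$, $\langle y,v\rangle=0$, $\langle x,w\rangle=\delta h$, $\langle y,w\rangle=-\delta h$, so the only nonvanishing term gives $T^2-P^2=(2\delta h)^2-0=4\delta^2h^2$, whereas $4\sqrt{\vp B}\,M^2\approx 16\sqrt{B}\,\delta^3h$; for fixed $h$ and $\delta\to 0$ the left side dominates. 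The same example shows $\|a-b\|=2\delta h$ while $\sqrt{\vp}\,M\approx 2\delta^2 h$, so no Cauchy--Schwarz rearrangement will produce the bound you need: the quantity $\|a-b\|$ genuinely scales like $\sqrt{\vp}\,\|x\|$, not $\sqrt{\vp}\,M$, when $x$ and $y$ are nearly parallel.

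The missing ingredient is Lemma \ref{L:ortho} (the orthogonality reduction from \cite{AAFG}), which the paper uses at the outset and which the introduction flags as the key tool: it suffices to verify the stability inequality for the perturbed frame on \emph{orthogonal} pairs, and for $\langle x,y\rangle=0$ one has $(\|x\|+\|y\|)^2\leq 2(\|x\|^2+\|y\|^2)=2M^2$. Once you restrict to orthogonal pairs, your factorization goes through with no delicate step at all: the ``naive'' bound gives
\begin{equation*}
\|a-b\|^2\leq\sum_{j\in J}\big(|\langle x,x_j-y_j\rangle|+|\langle y,x_j-y_j\rangle|\big)^2\leq 2\vp\big(\|x\|^2+\|y\|^2\big)=2\vp M^2,
\end{equation*}
while $\|a+b\|\leq(\sqrt{B}+\sqrt{B'})M\leq(2\sqrt{B}+\sqrt{\vp})M$, and since $\vp<2^{-4}C^{-4}B^{-1}\leq 2^{-4}B$ the product is at most $4\sqrt{\vp B}\,M^2$; your rearrangement then yields the stated constant. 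This is a genuinely different (and somewhat slicker) algebraic route than the paper's, which instead expands $\big\||\Psi x|-|\Psi y|\big\|^2$ into three sums and compares each to its unperturbed counterpart term by term --- but both arguments stand or fall on the orthogonality reduction, which your write-up omits.
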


Our proof of Theorem \ref{T:intro} relies on the recently proven theorem that if $x,y\in H$ and $(x_j)_{j\in J}$ is a frame of $H$ with analysis operator $\Theta$ then there exists $x',y'\in \textrm{span}\{x,y\}$ with $\langle x',y'\rangle=0$ such that $\min_{|\lambda|=1}\|x-\lambda y\|^2=\min_{|\lambda|=1}\|x'-\lambda y'\|^2=\|x'\|^2+\|y'\|^2$ and $\||\Theta x'|-|\Theta y'|\|\leq \||\Theta x|-|\Theta y|\|$ \cite{AAFG}.  Thus, when proving that $(x_j)_{j\in J}$ does $C$-stable phase retrieval, we only need to check that \eqref{E:Cstable} holds for orthogonal vectors.   In Section \ref{S:C} we show that the stability condition in \cite{B} provides a constant for the frame doing stable phase retrieval in $\ell_4$.  That is, we prove that if a frame $(x_j)_{j\in J}\subseteq H$ with analysis operator $\Theta$ satisfies the stability condition in \cite{B} for $a_0$ then for all $x,y\in H$ we have that
\begin{equation}\label{E:4stable}
\min_{|\lambda|=1} \|x- \lambda y\|_H \leq 2^{\frac{1}{2}}a_0^{-1/4} \big\||\Theta x|- |\Theta y|\big\|_{\ell_4(J)}=2^{\frac{1}{2}}a_0^{-1/4} \Big(\sum_{j\in J}\big||\langle x,x_j\rangle|-|\langle y,x_j\rangle|\big|^4\Big)^{1/4}.
\end{equation}
As the $\ell_2(J)$-norm dominates the $\ell_4(J)$-norm, this is a much stronger condition than \eqref{E:Cstable}. This is particularly apparent when considering the uniform stability of phase retrieval for frames of Hilbert spaces with arbitrary dimensions.  There are random constructions of frames which provide constants $C>0$ and $k\in\N$, so that for all $n\in\N$ there exists a Parseval frame $(x_j)_{j=1}^{kn}$ of $\ell_2^n$ which does $C$-stable phase retrieval \cite{CDFF}\cite{CL}\cite{EM}\cite{KL}\cite{KS}.  However,  $\ell_2^n$  is not uniformly isomorphic to a subspace of $\ell_4^{nk}$ \cite{FLM}, and thus for all $a_0>0$ and $k\in\N$ there exists $n\in\N$ so that every Parseval frame $(x_j)_{j=1}^{kn}$  of $\ell_2^n$ fails \eqref{E:4stable} for the value $a_0$.
On the other hand, working in $\ell_4(J)$ or more generally $L_4(\mu)$ allows for the introduction of some powerful analytic methods \cite{B}\cite{CPT}.  

Theorem \ref{T:intro} gives a solution to the problem of determining how perturbation affects the stability of phase retrieval of frames for finite dimensional Hilbert spaces.  However, phase retrieval is often considered in more general settings, and there are many opportunities for considering the effect of perturbations on phase retrieval. In applications one is usually interested in objects with some particular structure and it is not necessary that \eqref{E:Cstable} be satisfied for all $x,y\in H$, but only for $x$ and $y$ in some subset of interest.
Furthermore, it is often permissible to consider a weaker equivalence relation than $x\sim y$ if and only if $x=\lambda y$ for some $|\lambda|=1$.
For example, consider $f$ and $g$ to be audio recordings where the audio for $f$ stops a full second before the audio for $g$ starts.
Then  $f+g$, $f-g$, $-f+g$, and $-f-g$ would all sound the same and would all have the same absolute value.
Thus, we may consider larger equivalence classes as we would be satisfied with obtaining any of those vectors when doing phase retrieval.
This situation arises in many important contexts when studying phase retrieval and it is often possible to stably reconstruct the components of a signal which are supported on separated islands (even though it is not possible to determine the relative phase between different components) \cite{ADGY}\cite{CCSW}\cite{CDDL}\cite{FKM}\cite{GR}\cite{GR2}. The problem of how stability is affected by perturbations in these kinds of circumstances remains an important open problem. 
It is known that if a frame does norm retrieval and not phase retrieval then norm retrieval is not stable under perturbations \cite{HR}, and it would be interesting to know how perturbations affect phase retrieval by projections \cite{CGJT}\cite{EGK} and weak phase retrieval \cite{BCGJT} as well.
In \cite{CDFF}\cite{CPT}\cite{FOPT}, stable phase retrieval is studied for infinite dimensional subspaces of Banach lattices, and the problem of how stability of phase retrieval is affected by perturbations is considered in Section 4 of \cite{FOPT}. 
There are many opportunities for further research on how this may be quantified and how different Banach space geometry allows for more refined perturbation estimates.
We hope that this paper provides some inspiration to further study the relationship between perturbation and the stability of phase retrieval.

Note that in this paper we are considering only the stability of the recovery map  $|\Theta x|\mapsto[x]_\sim$, and not how to implement it.  There are many algorithms to implement phase retrieval in various contexts \cite{ABFM}\cite{CSV}\cite{FKMS}\cite{GKK}\cite{PBS}\cite{PMVI}\cite{SBFIKSSZ}.  The impact of  measurement error has been studied for these algorithms, and it would be worthwhile  to consider the effect of perturbation as well.

\section{Perturbations of frames}

Let $(x_j)_{j\in J}$ be a frame of a Hilbert space $H$ with analysis operator $\Theta: H\rightarrow \ell_2(J)$ given by $\Theta(x)=(\langle x,x_j\rangle)_{j\in J}$.  Let $C>0$ be some constant.  We say that $(x_j)_{j\in J}$ does {\em $C$-stable phase retrieval} if 
\begin{equation}\label{D:Cstable}
\min_{|\lambda|=1} \|x- \lambda y\|_H \leq C \big\||\Theta x|- |\Theta y|\big\|_{\ell_2(J)}=C\Big(\sum_{j\in J}\big||\langle x,x_j\rangle|-|\langle y,x_j\rangle|\big|^2\Big)^{\frac{1}{2}}\hspace{.15cm}\textrm{ for all }x,y\in H.
\end{equation}

Proving that a frame does $C$-stable phase retrieval using the definition would require checking \eqref{D:Cstable} for all pairs of vectors $x,y\in H$.  However, the following lemma gives that we  only need to check orthogonal pairs of vectors.  This greatly simplifies calculations, as if $x$ and $y$ are orthogonal then $\min_{|\lambda|=1}\|x-\lambda y\|^2=\|x\|^2+\|y\|^2$.

\begin{lem}[\cite{AAFG}]\label{L:ortho}
Let $(x_j)_{j \in J}$ be a frame of a Hilbert space $H$.  Then for all $ x, y \in H$ there exists $x_o,y_o\in H$ with $\langle x_o,y_o\rangle=0$ so that 
\begin{equation}\label{D:CstableC}
\displaystyle {\min}_{|\lambda|=1} \|x- \lambda y\|_H=\|x_o-y_o\|_H\hspace{.3cm}\textrm{ and } \hspace{.3cm}\big||\langle x_o,x_j\rangle|- |\langle y_o,x_j\rangle|\big|\leq\big||\langle x,x_j\rangle|- |\langle y,x_j\rangle|\big|\textrm{ for all }j\in J.
\end{equation}
 In particular, if $\Theta: H \rightarrow \ell_2(J)$ is the analysis operator of $(x_j)_{j \in J}$ then $(x_j)_{j \in J}$ does $C$-stable phase retrieval if and only if  
\begin{equation}\label{D:Cstable2}
(\|x_o\|_H^2+\|y_o\|_H^2)^{\frac{1}{2}}\leq C \big\||\Theta x_o|- |\Theta y_o|\big\|_{\ell_2(J)}\hspace{.2cm}\textrm{ for all $x_o,y_o\in H$ with $\langle x_o,y_o\rangle=0$.}
\end{equation}
\end{lem}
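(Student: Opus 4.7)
The plan is to construct $x_o$ and $y_o$ explicitly inside the subspace $V := \operatorname{span}\{x,y\}$. By replacing $y$ with a unimodular scalar multiple (which does not change $|\Theta y|$), I first reduce to the case $\langle x,y\rangle \geq 0$, so that $\min_{|\lambda|=1}\|x-\lambda y\|^2 = \|x-y\|^2$. The case $x=y$ is handled by $x_o=y_o=0$, so assume $M := xx^\ast - yy^\ast$ is nonzero. A short computation of $\operatorname{tr}(M)$ and $\det(M) = -\|x\|^2 \|y_\perp\|^2 \leq 0$ shows that $M$ has one non-negative and one non-positive eigenvalue, so by the spectral theorem I may write $M = \alpha pp^\ast - \beta qq^\ast$ with $\alpha,\beta \geq 0$, $\alpha+\beta > 0$, and $\{p,q\}$ orthonormal. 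I then set
\[
\mu := \frac{\|x-y\|^2}{\alpha+\beta}, \qquad x_o := \sqrt{\mu\alpha}\,p, \qquad y_o := \sqrt{\mu\beta}\,q.
\]
Orthogonality is immediate and $\|x_o\|^2 + \|y_o\|^2 = \mu(\alpha+\beta) = \|x-y\|^2$ gives $\|x_o - y_o\| = \min_{|\lambda|=1}\|x-\lambda y\|$.

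The heart of the argument is the coefficient inequality. For every $v \in H$ one has
\[
|\langle x_o,v\rangle|^2 - |\langle y_o,v\rangle|^2 \;=\; \mu\langle Mv,v\rangle \;=\; \mu\bigl(|\langle x,v\rangle|^2 - |\langle y,v\rangle|^2\bigr),
\]
and factoring $s^2 - t^2 = (s+t)(s-t)$ on both sides reduces $\bigl||\langle x_o,v\rangle|-|\langle y_o,v\rangle|\bigr| \leq \bigl||\langle x,v\rangle|-|\langle y,v\rangle|\bigr|$ to the reverse-direction bound on sums:
\[
\mu\bigl(|\langle x,v\rangle| + |\langle y,v\rangle|\bigr) \;\leq\; |\langle x_o,v\rangle| + |\langle y_o,v\rangle|.
\]
Writing $x = ap + bq$, $y = cp + dq$, and $v = v_1 p + v_2 q$ in the $\{p,q\}$ basis (the component of $v$ orthogonal to $V$ contributes nothing), the diagonality of $M$ in this basis together with $\langle x,y\rangle \geq 0$ permits normalizing the phases of $p,q$ so that $a,b,c,d$ are non-negative reals with $b = c\sqrt{\beta/\alpha}$ and $d = a\sqrt{\beta/\alpha}$. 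Combined with the identity $(\alpha+\beta)^2 = \|x+y\|^2 \|x-y\|^2$, this yields the crucial scaling $(a+c)^2 = \alpha/\mu$. Setting $w_1 := v_1$ and $w_2 := \sqrt{\beta/\alpha}\,v_2$, the target collapses to
\[
|aw_1 + cw_2| + |cw_1 + aw_2| \;\leq\; (a+c)(|w_1|+|w_2|),
\]
which is immediate from the triangle inequality $|aw_1+cw_2| \leq a|w_1| + c|w_2|$ and its symmetric counterpart.

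The ``in particular'' equivalence then follows at once: the direction that $C$-stability implies \eqref{D:Cstable2} holds trivially, since orthogonal pairs are a special case. Conversely, given any $x,y \in H$, applying \eqref{D:Cstable2} to the orthogonal pair $(x_o, y_o)$ constructed above yields
\[
\min_{|\lambda|=1}\|x - \lambda y\|_H \;=\; \|x_o - y_o\|_H \;\leq\; C\bigl\||\Theta x_o| - |\Theta y_o|\bigr\|_{\ell_2(J)} \;\leq\; C\bigl\||\Theta x| - |\Theta y|\bigr\|_{\ell_2(J)},
\]
using the coefficient bound just established. The main obstacle is the careful phase normalization in the complex case needed to reduce the relations among $a,b,c,d$ to the real setting; once this bookkeeping is in place, the decisive step is the clean triangle-inequality estimate above.
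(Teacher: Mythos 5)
Your argument is correct, and it is essentially the argument behind the cited result: the paper itself gives no proof of Lemma \ref{L:ortho} (it is quoted from [AAFG]), and the proof there likewise proceeds by taking the spectral decomposition of the rank-two self-adjoint operator $xx^*-yy^*$ into its positive and negative parts, rescaling so that $\|x_o\|^2+\|y_o\|^2=\min_{|\lambda|=1}\|x-\lambda y\|^2$, and deducing the termwise coefficient inequality from the identity $|\langle x_o,v\rangle|^2-|\langle y_o,v\rangle|^2=\mu\big(|\langle x,v\rangle|^2-|\langle y,v\rangle|^2\big)$ together with the sum bound $\mu\big(|\langle x,v\rangle|+|\langle y,v\rangle|\big)\leq|\langle x_o,v\rangle|+|\langle y_o,v\rangle|$. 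Your phase normalization, the identities $(\alpha+\beta)^2=\|x+y\|^2\|x-y\|^2$ and $(a+c)^2=\alpha/\mu$, and the final triangle-inequality step all check out (the degenerate cases $\alpha=0$, $ac=0$, or $|\langle x_o,v\rangle|+|\langle y_o,v\rangle|=0$ reduce to trivial verifications, as you indicate), so this stands as a complete self-contained proof.
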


We now restate and prove Theorem \ref{T:C intro} from the introduction.

\begin{thm}
Let $(x_j)_{j\in J}$ be a frame of a finite dimensional Hilbert space $H$ with frame bounds $B\geq A>0$ which does $C$-stable phase retrieval.  Let $\vp>0$ satisfy $\vp< 2^{-4}C^{-4}B^{-1}$ and let $(y_j)_{j\in J}$ be an indexed collection of vectors in $H$ such that 
$\sum_{j\in J}\|x_j-y_j\|^2<\vp$.  Then, $(y_j)_{j\in J}$ is a frame of $H$ with upper frame bound $B(1+\sqrt\frac{\vp}{B})^2$ and lower frame bound $A(1-\sqrt\frac{\vp}{B})^2$ which does $C(1-4C^2\sqrt{\vp B})^{-\frac{1}{2}}$-stable phase retrieval for $H$.
\end{thm}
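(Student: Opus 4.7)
The plan is to reduce to orthogonal pairs via Lemma~\ref{L:ortho} and then bound how much the analysis operator of $(y_j)_{j\in J}$ can deviate from that of $(x_j)_{j\in J}$ on such a pair. The frame-bound assertion is immediate from Christensen's Theorem~\ref{T:C intro}: since any $C$-stable phase retrieval constant satisfies $C\geq A^{-1/2}$, the hypothesis $\vp<2^{-4}C^{-4}B^{-1}$ gives $\vp<2^{-4}A^{2}B^{-1}\leq 2^{-4}A<A$, so Theorem~\ref{T:C intro} applies directly.

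Write $\Theta_x,\Theta_y:H\to\ell_2(J)$ for the two analysis operators, fix orthogonal $u,v\in H$, and set $f_x:=|\Theta_x u|-|\Theta_x v|$ and $f_y:=|\Theta_y u|-|\Theta_y v|$. The first ingredient I would prove is the perturbation estimate
\begin{equation*}
\|f_x-f_y\|_{\ell_2(J)}\leq \sqrt{2\vp}\,\bigl(\|u\|^2+\|v\|^2\bigr)^{1/2}.
\end{equation*}
This follows by applying $\bigl||a|-|b|\bigr|\leq|a-b|$ termwise to each of $|\langle u,x_j\rangle|-|\langle u,y_j\rangle|$ and $|\langle v,x_j\rangle|-|\langle v,y_j\rangle|$, using the triangle inequality in $\ell_2(J)$, and then bounding each sum $\sum_j|\langle u,x_j-y_j\rangle|^2\leq\|u\|^2\sum_j\|x_j-y_j\|^2<\vp\|u\|^2$ by termwise Cauchy--Schwarz.

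The delicate step is producing the square-root exponent in the advertised constant $C(1-4C^2\sqrt{\vp B})^{-1/2}$. A direct triangle inequality $\|f_x\|\leq\|f_y\|+\|f_x-f_y\|$ would only yield the weaker constant $C/(1-C\sqrt{2\vp})$, with no role for $B$. Instead I would square and factor: under the harmless assumption $\|f_y\|\leq\|f_x\|$,
\begin{equation*}
\|f_x\|_{\ell_2(J)}^2-\|f_y\|_{\ell_2(J)}^2=\bigl(\|f_x\|-\|f_y\|\bigr)\bigl(\|f_x\|+\|f_y\|\bigr)\leq 2\|f_x-f_y\|_{\ell_2(J)}\,\|f_x\|_{\ell_2(J)}.
\end{equation*}
The upper frame bound $B$ for $(x_j)_{j\in J}$ gives $\|f_x\|_{\ell_2(J)}\leq\sqrt{2B(\|u\|^2+\|v\|^2)}$, so combining with the previous display produces
\begin{equation*}
\|f_x\|_{\ell_2(J)}^2-\|f_y\|_{\ell_2(J)}^2\leq 4\sqrt{\vp B}\,\bigl(\|u\|^2+\|v\|^2\bigr).
\end{equation*}

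To finish, Lemma~\ref{L:ortho} rephrases the $C$-stability of $(x_j)_{j\in J}$ on orthogonal pairs as $\|u\|^2+\|v\|^2\leq C^2\|f_x\|_{\ell_2(J)}^2$; combining with the previous display and rearranging gives
\begin{equation*}
\bigl(1-4C^2\sqrt{\vp B}\bigr)(\|u\|^2+\|v\|^2)\leq C^2\|f_y\|_{\ell_2(J)}^2.
\end{equation*}
The hypothesis $\vp<2^{-4}C^{-4}B^{-1}$ is exactly what makes the left coefficient strictly positive. Taking square roots and applying Lemma~\ref{L:ortho} once more upgrades this orthogonal-pair inequality to $C(1-4C^2\sqrt{\vp B})^{-1/2}$-stable phase retrieval for $(y_j)_{j\in J}$. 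I expect the single crucial move to be the factoring $a^2-b^2=(a-b)(a+b)$: it is what trades one $\sqrt{\vp}$ for a $\sqrt{B}$ and produces the exponent $-1/2$ in the stability constant rather than the $-1$ that the naive triangle inequality would yield.
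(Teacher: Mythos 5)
Your proof is correct, and it follows the same skeleton as the paper's: reduce to orthogonal pairs via Lemma~\ref{L:ortho}, deduce the frame-bound statement from $C\geq A^{-1/2}$ and Christensen's theorem, and show that passing from $(x_j)$ to $(y_j)$ costs at most $4\sqrt{\vp B}\,(\|u\|^2+\|v\|^2)$ in the squared measurement gap --- exactly the loss the paper obtains. Where you differ is in how that central estimate is organized. The paper expands $\big\||\Theta_Y u|-|\Theta_Y v|\big\|^2$ into the three sums $\sum|\langle u,y_j\rangle|^2$, $\sum|\langle u,y_j\rangle||\langle v,y_j\rangle|$, $\sum|\langle v,y_j\rangle|^2$ and compares each separately to its $x_j$-counterpart by termwise Cauchy--Schwarz, collecting error terms of the form $\vp^{1/2}B^{1/2}\|u\|^2$, $(\vp+\vp^{1/2}B^{1/2})\|u\|\|v\|$, etc., and then absorbing the $\vp$ term using $\vp<B$. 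You instead treat $f_x$ and $f_y$ as vectors in $\ell_2(J)$, prove the single perturbation bound $\|f_x-f_y\|\leq\sqrt{2\vp}(\|u\|^2+\|v\|^2)^{1/2}$ via the reverse triangle inequality, and convert it into a bound on $\|f_x\|^2-\|f_y\|^2$ through the factorization $(a-b)(a+b)$ together with the upper bound $\|f_x\|\leq\sqrt{2B(\|u\|^2+\|v\|^2)}$. Your version is cleaner and makes transparent why the final constant involves $\sqrt{\vp B}$ rather than $\sqrt{\vp}$ alone (and why the exponent is $-\tfrac12$), while the paper's term-by-term expansion is more mechanical but requires no case distinction such as your harmless assumption $\|f_y\|\leq\|f_x\|$ (which you correctly note is vacuous in the other case, since the left side is then negative). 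Both routes land on the identical constant $C(1-4C^2\sqrt{\vp B})^{-1/2}$, and the positivity of $1-4C^2\sqrt{\vp B}$ under the hypothesis on $\vp$ is handled the same way in both.
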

\begin{proof}
As $(x_j)_{j\in J}$ has lower frame bound $A$ and does $C$-stable phase retrieval, we have that $C\geq A^{-\frac{1}{2}}$.  Thus, $\vp<2^{-4}C^{-4}B^{-1}\leq 2^{-4}
A^{2}B^{-1}\leq 2^{-4}A$.  Our bound on $\vp$ thus satisfies the hypothesis of Christensen's perturbation theorem (Theorem \ref{T:C intro}).
Thus, $(y_j)_{j\in J}$ is a frame of $H$ with with upper frame bound $B(1+\sqrt\frac{\vp}{B})^2$ and lower frame bound $A(1-\sqrt\frac{\vp}{B})^2$.  Let $\Theta_X:H\rightarrow \ell_2(J)$ be the analysis operator of $(x_j)_{j\in J}$ and let $\Theta_Y:H\rightarrow \ell_2(J)$ be the analysis operator of $(y_j)_{j\in J}$.

  By Lemma \ref{L:ortho}, we only need to consider orthogonal vectors when proving that $(y_j)_{j\in J}$ does $(C^{-2}-4\vp^{\frac{1}{2}}B^{\frac{1}{2}})^{-\frac{1}{2}}$-stable phase retrieval.  Let $x,y\in H$ with $\langle x,y\rangle=0$.

We have that $\big\||\Theta_Y(x)|-|\Theta_Y(y)|\big\|^2$ is given by the following equation.
\begin{equation}\label{E:0}
\sum_{j\in J} \Big| |\langle  x, y_j \rangle|- |  \langle  y, y_j \rangle|\Big|^2= \sum_{j\in J} |\langle  x, y_j \rangle|^2 - 2 \sum_{j\in J}|\langle  x, y_j \rangle||\langle  y, y_j \rangle| + \sum_{j\in J}|\langle  y, y_j \rangle|^2 
\end{equation}

We now compute bounds for each of the summations separately. We will do this by comparing the sums to the corresponding ones with $(x_j)_{j\in J}$.

\begin{align*}
\sum_{j\in J} |\langle  x, y_j \rangle|^2-&\sum_{j\in J}|\langle  x, x_j\rangle |^2= \sum_{j\in J} |\langle  x, x_j-(x_j-y_j)\rangle|^2-\sum_{j\in J}|\langle  x, x_j\rangle |^2\\
&\geq  \sum_{j\in J}\Big(|\langle  x, x_j\rangle |- |\langle x,x_j-y_j\rangle| \Big )^2-\sum_{j\in J}|\langle  x, x_j\rangle |^2\\
&=\sum_{j\in J} \Big (|\langle  x, x_j\rangle |^2 -2|\langle  x, x_j\rangle |\langle  x, x_j- y_j\rangle | + |\langle  x, x_j-y_j\rangle |^2 \Big )-\sum_{j\in J}|\langle  x, x_j\rangle |^2\\
 &\geq -2 \sum_{j\in J} |\langle  x, x_j\rangle |\langle  x, x_j- y_j\rangle | \\
&\geq  -2 \Big (\sum_{j\in J} |\langle  x, x_j\rangle|^2\Big )^\frac{1}{2} \Big( \sum_{j\in J} |\langle  x, x_j- y_j\rangle |^2 \Big)^\frac{1}{2} \hspace{1cm}\textrm{ by Cauchy-Schwarz,}\\
&\geq  -2 \Big(\sum_{j\in J}|\langle  x, x_j\rangle |^2\Big)^\frac{1}{2} \Big(\sum_{j\in J} \| x\|^2 \|x_j-y_j\|^2\Big )^\frac{1}{2} \\
&\geq  -2 B^\frac{1}{2} \| x\| \Big(\| x\|^2\sum_{j\in J}  \|x_j-y_j\|^2\Big )^\frac{1}{2}\hspace{1cm}\textrm{ as $(x_j)_{j\in J}$ has upper frame bound $B$,} \\
&\geq -2 B^\frac{1}{2} \| x\|^2 \vp^{\frac{1}{2}} \hspace{1cm}\textrm{ as }\sum \|x_j-y_j\|^2<\vp.
\end{align*}

Thus, we have that
\begin{equation}\label{E:1}
    \sum_{j\in J} |\langle  x, y_j \rangle|^2\geq\sum_{j\in J}|\langle  x, x_j\rangle |^2-2 B^\frac{1}{2} \| x\|^2 \vp^{\frac{1}{2}}
\end{equation}
Likewise, we have that 
\begin{equation}\label{E:3}
    \sum_{j\in J} |\langle  y, y_j \rangle|^2\geq\sum_{j\in J}|\langle  y, x_j\rangle |^2-2 B^\frac{1}{2} \| y\|^2 \vp^{\frac{1}{2}}
\end{equation}

We now bound the remaining term.

 \begin{align*}
 \sum_{j\in J}&|\langle  x, y_j \rangle||\langle  y, y_j \rangle|-\sum_{j\in J}|\langle  x, x_j \rangle||\langle  y, x_j \rangle|\\
&= \sum_{j\in J} \Big |\langle  x, x_j-(x_j-y_j)\rangle \langle  y, x_j-(x_j-y_j)\rangle\Big |-\sum_{j\in J}|\langle  x, x_j \rangle||\langle  y, x_j \rangle|\\
& \leq \sum_{j\in J}| \langle x,x_j-y_j \rangle \langle y,x_j \rangle|+ | \langle x,x_j \rangle \langle y,x_j-y_j \rangle | + | \langle x,x_j-y_j \rangle \langle y,x_j-y_j \rangle |\\
&\leq  \Big( \sum_{j\in J} | \langle x,x_j-y_j \rangle |^2 \Big)^\frac{1}{2} \Big ( \sum_{j\in J} | \langle y,x_j \rangle | ^2 \Big ) ^\frac{1}{2} + \Big ( \sum_{j\in J} | \langle x,x_j \rangle | ^2 \Big ) \Big( \sum_{j\in J} | \langle y,x_j-y_j \rangle |^2 \Big)^\frac{1}{2} +
\|x\|\|y\|\sum_{j\in J} \|x_j-y_j \|^2\\
&\leq  \|x\|\Big( \sum_{j\in J} \|x_j-y_j \|^2 \Big)^\frac{1}{2} \Big ( \sum_{j\in J} | \langle y,x_j \rangle | ^2 \Big ) ^\frac{1}{2} + \|y\|\Big ( \sum_{j\in J} | \langle x,x_j \rangle | ^2 \Big ) \Big( \sum_{j\in J} \| x_j-y_j \|^2 \Big)^\frac{1}{2} +
\|x\|\|y\|\sum_{j\in J} \|x_j-y_j \|^2\\
&\leq  \|x\|\vp^\frac{1}{2} \Big ( \sum_{j\in J} | \langle y,x_j \rangle | ^2 \Big ) ^\frac{1}{2} + \|y\|\Big ( \sum_{j\in J} | \langle x,x_j \rangle | ^2 \Big ) \vp^\frac{1}{2} +
\|x\|\|y\|\vp\\
&\leq  \|x\|\vp^\frac{1}{2}   B^\frac{1}{2}\|y\| + \|y\|B^{\frac{1}{2}}\|x\| \vp^\frac{1}{2} +
\|x\|\|y\|\vp\\
 \end{align*}

Thus, we have that 
\begin{equation}\label{E:2}
    \sum_{j\in J} |\langle  x, y_j \rangle||\langle  y, y_j \rangle|\leq \sum_{j\in J} |\langle  x, x_j \rangle||\langle  y, x_j \rangle|+(\vp+\vp^{\frac{1}{2}}B^{\frac{1}{2}})\|x\|\|y\|
\end{equation}

By combining \eqref{E:1},\eqref{E:3}, and \eqref{E:2} with \eqref{E:0} we have that
\begin{align*}
\sum_{j\in J} &\Big| |\langle  x, y_j \rangle|- |  \langle  y, y_j \rangle|\Big|^2= \sum_{j\in J} |\langle  x, y_j \rangle|^2 - 2 \sum_{j\in J}|\langle  x, y_j \rangle||\langle  y, y_j \rangle| + \sum_{j\in J}|\langle  y, y_j \rangle|^2 \\
&\geq   \sum_{j\in J}|\langle  x, x_j\rangle |^2
-2\sum_{j\in J} |\langle  x, x_j \rangle||\langle  y, x_j \rangle|
+  \sum_{j\in J}|\langle  x, x_j\rangle |^2-2 \vp^{\frac{1}{2}} B^\frac{1}{2} \| x\|^2 
-2(\vp+\vp^{\frac{1}{2}}B^{\frac{1}{2}})\|x\|\|y\|-2  \vp^{\frac{1}{2}}B^\frac{1}{2} \| y\|^2 \\
&=  \sum_{j\in J} \Big| |\langle  x, x_j \rangle|- |  \langle  y, x_j \rangle|\Big|^2-2  \vp^{\frac{1}{2}}B^\frac{1}{2} \| x\|^2 
-2(\vp+\vp^{\frac{1}{2}}B^{\frac{1}{2}})\|x\|\|y\|-2  \vp^{\frac{1}{2}}B^\frac{1}{2} \| y\|^2 \\
&\geq   \sum_{j\in J} \Big| |\langle  x, x_j \rangle|- |  \langle  y, x_j \rangle|\Big|^2-2  \vp^{\frac{1}{2}}B^\frac{1}{2}  \| x\|^2 
-4\vp^{\frac{1}{2}}B^{\frac{1}{2}}\|x\|\|y\|-2  \vp^{\frac{1}{2}}B^\frac{1}{2}  \| y\|^2\hspace{.5cm}\textrm{ as }\vp<A\leq B\\
&=   \sum_{j\in J} \Big| |\langle  x, x_j \rangle|- |  \langle  y, x_j \rangle|\Big|^2-2\vp^{\frac{1}{2}}B^{\frac{1}{2}}(\|x\|+\|y\|)^2\\
&\geq   C^{-2}\min_{|\lambda|=1}\|x-\lambda y\|^2-2\vp^{\frac{1}{2}}B^{\frac{1}{2}}(\|x\|+\|y\|)^2\hspace{.5cm}\textrm{ as $(x_j)_{j\in J}$ does $C$-stable phase retrieval,}\\
&\geq   C^{-2}\min_{|\lambda|=1}\|x-\lambda y\|^2-4\vp^{\frac{1}{2}}B^{\frac{1}{2}}(\|x\|^2+\|y\|^2)\\
&=   C^{-2}\min_{|\lambda|=1}\|x-\lambda y\|^2-4\vp^{\frac{1}{2}}B^{\frac{1}{2}}\min_{|\lambda|=1}\|x-\lambda y\|^2\hspace{.5cm}\textrm{ as $x$ and $y$ are orthogonal,}\\
&=C^{-2}(1-4\vp^{\frac{1}{2}}C^2B^{\frac{1}{2}})\min_{|\lambda|=1}\|x-\lambda y\|^2
\end{align*}
We have that $\vp< 2^{-4}C^{-4}B^{-1}$ and hence $C^{-2}(1-4\vp^{\frac{1}{2}}C^2B^{\frac{1}{2}})$ is positive.  Thus, we have for every pair of orthogonal vectors $x,y\in H$ that
$$
\displaystyle {\min}_{|\lambda|=1} \|x- \lambda y\|=(\|x\|^2+\|y\|^2)^{\frac{1}{2}} \leq C(1-4\vp^{\frac{1}{2}}C^2B^{\frac{1}{2}})^{-\frac{1}{2}}\big\||\Theta_Y x|- |\Theta_Y y|\big\|,
$$
Thus, the frame $(y_j)_{j\in J}$ does $C(1-4\vp^{\frac{1}{2}}C^2B^{\frac{1}{2}})^{-\frac{1}{2}}$-stable phase retrieval by Lemma \ref{L:ortho}.

\end{proof}

 \section{Stability comparisons}\label{S:C}

In \cite{B}, the value $a_0$ in the following lemma is used as a measurement for the stability of phase retrieval.

\begin{lem}[Lemma 3.2 (3)\cite{B}]\label{L:Balan}
    Let $(x_j)_{j=1}^m$ be a frame for $\C^n$.  Then $(x_j)_{j=1}^m$ does phase retrieval if and only if there is a constant $a_0$ so that for all $x,y\in\C^n$ we have that
\begin{equation}\label{E:Balan}
  \sum_{j=1}^m\big||\langle x, x_j\rangle|^2-|\langle y,x_j\rangle|^2\big|^2\geq a_0 \Big( \|x-y\|^2\|x+y\|^2-4(imag(\langle x,y\rangle))^2\Big).  
\end{equation}
\end{lem}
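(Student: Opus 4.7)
My approach would rewrite both sides of \eqref{E:Balan} in terms of the rank-$\leq 2$ Hermitian matrix $xx^*-yy^*$. The key identity
\begin{equation*}
|\langle x,x_j\rangle|^2-|\langle y,x_j\rangle|^2 = \langle (xx^*-yy^*)x_j,x_j\rangle
\end{equation*}
lets me express the left side of \eqref{E:Balan} as $\|T(xx^*-yy^*)\|^2$, where $T$ is the linear map on $n\times n$ Hermitian matrices defined by $T(A) = (\langle A,x_jx_j^*\rangle_{HS})_{j=1}^m$. The plan is then to recognize \eqref{E:Balan} as a two-sided comparison between $\|T(A)\|^2$ and a natural quadratic functional on the cone $\mathcal{M} = \{xx^*-yy^* : x,y\in\C^n\}$.

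For $(\Leftarrow)$, a short algebraic argument suffices. If \eqref{E:Balan} holds with $a_0 > 0$ and $|\Theta x|=|\Theta y|$, its left side vanishes. Using $\|x-y\|^2\|x+y\|^2 = (\|x\|^2+\|y\|^2)^2 - 4(\mathrm{Re}\langle x,y\rangle)^2$ rewrites the right side as $(\|x\|^2+\|y\|^2)^2 - 4|\langle x,y\rangle|^2\geq 0$, with nonnegativity coming from AM-GM and Cauchy-Schwarz. Forcing this to vanish gives $\|x\|=\|y\|$ together with linear dependence of $x,y$, and hence $x\sim y$.

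For $(\Rightarrow)$ I would first show that the right side of \eqref{E:Balan} is comparable to $\|xx^*-yy^*\|_{HS}^2$. Writing $\lambda_+\geq 0\geq\lambda_-$ for the (possibly zero) eigenvalues of $xx^*-yy^*$, direct calculation yields $\lambda_++\lambda_-=\|x\|^2-\|y\|^2$ and $\lambda_+\lambda_- = |\langle x,y\rangle|^2-\|x\|^2\|y\|^2\leq 0$, so that
\begin{equation*}
\|x-y\|^2\|x+y\|^2 - 4(\mathrm{Im}\langle x,y\rangle)^2 = (\lambda_+-\lambda_-)^2 \quad\text{and}\quad \|xx^*-yy^*\|_{HS}^2 = \lambda_+^2+\lambda_-^2.
\end{equation*}
Since $\lambda_+\lambda_-\leq 0$, one obtains $\|xx^*-yy^*\|_{HS}^2 \leq (\lambda_+-\lambda_-)^2 \leq 2\|xx^*-yy^*\|_{HS}^2$, so it suffices to find $c>0$ with $\|T(A)\|^2\geq c\|A\|_{HS}^2$ for every $A\in\mathcal{M}$.

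The hard part is producing this uniform lower bound via compactness, and the subtlety lies in verifying that $\mathcal{M}$ is closed. The spectral theorem identifies $\mathcal{M}$ with $\{A\in\mathrm{Herm}(\C^n) : \lambda_2(A)\leq 0\leq \lambda_{n-1}(A)\}$, i.e.\ Hermitian matrices with at most one positive and one negative eigenvalue; this set is closed because the eigenvalue functions $\lambda_2$ and $\lambda_{n-1}$ are continuous. Hence the intersection of $\mathcal{M}$ with the Frobenius unit sphere is compact. Phase retrieval translates to $\ker T\cap\mathcal{M} = \{0\}$, so the continuous function $\|T(\cdot)\|$ has a strictly positive minimum $\sqrt{c}$ on this compact set. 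Extending by homogeneity to all of $\mathcal{M}$ and absorbing the factor of $2$ from the comparison would yield \eqref{E:Balan} with $a_0 = c/2$.
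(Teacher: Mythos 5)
The paper offers no proof of this statement: it is imported verbatim from Balan \cite{B} (Lemma 3.2(3)), so there is no in-paper argument to compare against. Judged on its own, your proof is correct and complete, and it follows what is essentially the standard (and Balan's own) route: lift to the outer-product difference $A=xx^*-yy^*$, identify the left side of \eqref{E:Balan} as $\|T(A)\|^2$, show the right side equals $(\lambda_+-\lambda_-)^2=(\|x\|^2+\|y\|^2)^2-4|\langle x,y\rangle|^2$ and is therefore comparable (within a factor of $2$) to $\|A\|_{HS}^2$, and then get the uniform constant by compactness of the closed, positively homogeneous set $\mathcal{M}$ intersected with the Frobenius sphere. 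All the computations check out: the eigenvalue identities $\lambda_++\lambda_-=\|x\|^2-\|y\|^2$ and $\lambda_+\lambda_-=|\langle x,y\rangle|^2-\|x\|^2\|y\|^2\le 0$ are right, the closedness of $\mathcal{M}$ via $\lambda_2\le 0\le\lambda_{n-1}$ is the key nontrivial point and you identify and justify it correctly (for $n\ge 2$; the case $n=1$ is trivial since $\mathcal{M}=\R$), and the reverse implication via the equality case of Cauchy--Schwarz and AM--GM is fine. Two cosmetic remarks: the lemma as stated omits ``$a_0>0$,'' which must be read into it for the equivalence to have content (you do this implicitly), and in the final step you should note that $\ker T\cap\mathcal{M}=\{0\}$ uses the standard fact that $xx^*=yy^*$ forces $x=\lambda y$ with $|\lambda|=1$ --- both are minor and do not affect correctness.
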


The following theorem characterizes how the value of $a_0$ is affected by small perturbations.
\begin{thm}[Theorem 1.1 \cite{B}]\label{T:Balan}
Let $X=(x_j)_{j=1}^m$ be a frame for $\C^n$ with upper frame bound $B$ which does phase retrieval. Let $a_0(X)$ be the constant given in Lemma \ref{L:Balan}. Let $\rho>0$ be given by
\begin{equation}
\rho=\min\Big(\frac{1}{\sqrt{m}},\frac{a_0(X)}{2\sqrt{2}(3B+2)^{3/2}}  \Big).    
\end{equation}
Then if $Y=(y_j)_{j=1}^m\subseteq\C^n$ satisfies that $\|x_j-y_j\|<\rho$ for all $1\leq j\leq m$ then $(y_j)_{j=1}^m$ is a frame of $\C^n$ which does phase retrieval and $\frac{1}{2}a_0(X)<a_0(Y)$.
\end{thm}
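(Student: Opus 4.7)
The plan is to compare the squared stability functionals
\[ S_X(x,y) := \sum_{j=1}^m \big(|\langle x, x_j\rangle|^2 - |\langle y, x_j\rangle|^2\big)^2 \text{ and } S_Y(x,y) := \sum_{j=1}^m \big(|\langle x, y_j\rangle|^2 - |\langle y, y_j\rangle|^2\big)^2 \]
and to show that the smallness hypothesis on $\rho$ forces $S_Y(x,y) \geq \tfrac{1}{2} a_0(X)\, R(x,y)$ for every $x,y \in \C^n$, where $R(x,y) := \|x-y\|^2\|x+y\|^2 - 4(\operatorname{Im}\langle x, y\rangle)^2$. Setting $u = x+y$, $v = x-y$, and $z_j = x_j - y_j$, the polarization identity $|\langle x, x_j\rangle|^2 - |\langle y, x_j\rangle|^2 = \operatorname{Re}(\langle u, x_j\rangle \overline{\langle v, x_j\rangle})$ turns both $B_j := |\langle x, x_j\rangle|^2 - |\langle y, x_j\rangle|^2$ and $A_j := |\langle x, y_j\rangle|^2 - |\langle y, y_j\rangle|^2$ into bilinear forms in $(u,v)$, while $R(x,y)$ simplifies to $\|u\|^2\|v\|^2 - (\operatorname{Im}\langle u, v\rangle)^2$. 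In this language, Lemma \ref{L:Balan} says exactly that $S_X \geq a_0(X)\, R(x,y)$.

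Expanding $\langle \cdot, y_j\rangle = \langle \cdot, x_j\rangle - \langle \cdot, z_j\rangle$ and subtracting $B_j$ yields
\[ A_j - B_j = -\operatorname{Re}(\langle u, x_j\rangle \overline{\langle v, z_j\rangle}) - \operatorname{Re}(\langle u, z_j\rangle \overline{\langle v, x_j\rangle}) + \operatorname{Re}(\langle u, z_j\rangle \overline{\langle v, z_j\rangle}). \]
A direct Cauchy--Schwarz estimate, invoking the upper frame bound $\sum |\langle u, x_j\rangle|^2 \leq B\|u\|^2$, the pointwise estimate $|\langle \cdot, z_j\rangle| \leq \rho\|\cdot\|$, and the crude upper bound $\sum_j|\langle \cdot, z_j\rangle|^2 \leq m\rho^2 \|\cdot\|^2 \leq \|\cdot\|^2$ (using $\rho \leq 1/\sqrt m$), produces only $\sum_j (A_j - B_j)^2 \lesssim B \rho^2 \|u\|^2 \|v\|^2$. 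This is not enough: when $v$ is nearly parallel to $iu$ the quantity $\|u\|^2\|v\|^2$ dominates $R(x,y)$, yet $S_X$ can be as small as $a_0(X)\, R(x,y)$, so the naive error term cannot be absorbed into $S_X$.

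The key refinement, which I expect to be the main technical obstacle, is to replace $\|u\|^2\|v\|^2$ by $R(x,y)$ on the right. Writing $v = cu + w$ with $w \perp u$ and $c \in \C$, a short computation gives $R(x,y) = \|u\|^2\big(\operatorname{Re}(c)^2\|u\|^2 + \|w\|^2\big)$, making explicit that the zero set of $R$ is the trivial phase-retrieval direction $w = 0$, $\operatorname{Re}(c) = 0$. Substituting $v = cu + w$ into the formula for $A_j - B_j$, only $\operatorname{Re}(c)$ survives in the ``pure $c$'' contributions (the factors that multiply $\bar c$ combine into real quantities, so $\operatorname{Im}(c)$ is killed by the outer $\operatorname{Re}$), while the rest is linear in $\langle w, x_j\rangle$ or $\langle w, z_j\rangle$. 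Estimating these pieces by Cauchy--Schwarz against the upper frame bound gives the sharpened estimate
\[ \sum_j (A_j - B_j)^2 \leq C_B\, \rho^2\, R(x,y) \]
with $C_B$ polynomial in $B$.

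Once the sharp estimate is available, the conclusion is routine. The identity
\[ S_Y = S_X + 2\sum_j B_j(A_j - B_j) + \sum_j (A_j - B_j)^2, \]
together with Cauchy--Schwarz $|\sum_j B_j(A_j - B_j)| \leq \sqrt{S_X \cdot \sum_j(A_j - B_j)^2}$ and the hypothesis $R(x,y) \leq S_X/a_0(X)$, gives $S_Y \geq S_X\big(1 - 2\rho\sqrt{C_B/a_0(X)}\big)$. A careful tracking of $C_B$ (which picks up the upper frame bound of $(y_j)$, itself bounded crudely by $3B+2$, and produces the factor $(3B+2)^{3/2}$ in the statement) shows that the bound $\rho \leq a_0(X)/(2\sqrt{2}(3B+2)^{3/2})$ makes the parenthesized factor exceed $1/2$. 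Hence $S_Y \geq \tfrac{1}{2} S_X \geq \tfrac{1}{2} a_0(X) R(x,y)$, which is $a_0(Y) > a_0(X)/2 > 0$. Positivity of $a_0(Y)$ is equivalent to $(y_j)$ doing phase retrieval by Lemma \ref{L:Balan}, and a finite collection in $\C^n$ which does phase retrieval must span $\C^n$, so $(y_j)_{j=1}^m$ is a frame of $\C^n$.
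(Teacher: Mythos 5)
First, a point of reference: the paper does not prove this statement --- it is quoted from Balan's paper \cite{B} (Theorem~1.1 there) and used as a black box, so there is no in-paper proof to compare against. Balan's own argument lifts to the space of Hermitian matrices via $x\mapsto xx^*$, using that $\|x-y\|^2\|x+y\|^2-4(\mathrm{Im}\langle x,y\rangle)^2$ is the squared trace norm of $xx^*-yy^*$, and perturbs the induced linear map $T\mapsto(\langle T,x_jx_j^*\rangle)_j$. Your polarization-plus-orthogonal-decomposition route is genuinely different and more elementary, and its central algebraic claim checks out: with $v=cu+w$, $w\perp u$, the coefficient of $\bar c$ inside the real parts in $A_j-B_j$ is $-2\mathrm{Re}\bigl(\langle u,x_j\rangle\overline{\langle u,z_j\rangle}\bigr)+|\langle u,z_j\rangle|^2$, which is real, so only $\mathrm{Re}(c)$ survives and an estimate $\sum_j(A_j-B_j)^2\le C_B\,\rho^2 R(x,y)$ with $C_B$ of order $B$ does follow.

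The gap is precisely where you defer to ``a careful tracking of $C_B$.'' Your method produces the sufficient condition $2\rho\sqrt{C_B/a_0(X)}\le\tfrac12$, i.e.\ $\rho\lesssim\sqrt{a_0(X)}/\sqrt{C_B}$, whereas the theorem hands you $\rho\le a_0(X)/\bigl(2\sqrt2(3B+2)^{3/2}\bigr)$, which is \emph{linear} in $a_0(X)$. Substituting the stated $\rho$ into your parenthesized factor gives $1-\sqrt{a_0(X)C_B}\big/\bigl(\sqrt2(3B+2)^{3/2}\bigr)$, which is not even positive without an a priori upper bound on $a_0(X)$; you must invoke $a_0(X)\le B^2$ (from Lemma~\ref{L:Balan} with $y=0$ together with $\sum_j|\langle x,x_j\rangle|^4\le(\sum_j|\langle x,x_j\rangle|^2)^2$), which you never do, and then verify $2B^2C_B\le(3B+2)^3$. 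With the crude constants your own estimates generate (splitting $(a+b)^2\le 2a^2+2b^2$ twice yields $C_B\approx 16B$), this inequality \emph{fails} for large $B$, since $32B^3\not\le 27B^3+\cdots$; it only closes if you keep the three leading cross terms together via $(a+b+c)^2\le 3(a^2+b^2+c^2)$ to get $C_B\le 12B$ plus higher-order-in-$\rho$ corrections, which must then also be controlled. So the constant-chasing you wave at is load-bearing, and as written the argument does not yet establish the theorem with the stated value of $\rho$. Everything else --- the reduction to $R(x,y)=\|u\|^2(\mathrm{Re}(c)^2\|u\|^2+\|w\|^2)$, the use of $\rho\le 1/\sqrt m$ only to get $\sum_j\|z_j\|^2\le 1$, and the deduction that $a_0(Y)>0$ gives phase retrieval and hence a frame --- is correct.
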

Note that the value $\rho$ stated in Theorem \ref{T:Balan} depends on the number of frame vectors.  However, the  condition that $\rho\leq m^{-\frac{1}{2}}$ is only used to guarantee that $\sum_{j=1}^m \|x_j-y_j\|^2\leq 1$. 
Thus, we can add this inequality directly to the hypothesis to obtain a perturbation theorem which provides a value for $\rho$ which is independent of the number of frame vectors. 
This gives the following corollary which is more analogous to  Theorem \ref{T:C intro} and Theorem \ref{T:intro}. 

\begin{cor}\label{C:Balan}
Let $X=(x_j)_{j=1}^m$ be a frame for $\C^n$ with upper frame bound $B$ which does phase retrieval. Let $a_0(X)$ be the constant given in Lemma \ref{L:Balan}. Let $\rho>0$ be given by
\begin{equation}
\rho=\min\Big(1,\frac{(a_0(X))^2}{8(3B+2)^3}  \Big).    
\end{equation}
Then if $Y=(y_j)_{j=1}^m\subseteq\C^n$ satisfies $\sum_{j=1}^m\|x_j-y_j\|^2<\rho$ then $(y_j)_{j=1}^m$ is a frame of $\C^n$ which does phase retrieval and $\frac{1}{2}a_0(X)<a_0(Y)$.
\end{cor}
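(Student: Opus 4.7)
The plan is to reduce the corollary to Balan's argument for Theorem \ref{T:Balan}. As the authors observe, the pointwise hypothesis $\|x_j - y_j\| < 1/\sqrt{m}$ in that theorem enters the proof solely through its consequence $\sigma := \sum_{j=1}^m \|x_j - y_j\|^2 \leq 1$, and since the corollary posits $\sigma < \rho \leq 1$ directly, the first term in Balan's $\rho$ becomes unnecessary. The second term then appears squared: Balan's pointwise bound $\max_j\|x_j - y_j\| \leq a_0(X)/(2\sqrt{2}(3B+2)^{3/2})$ yields $\sigma \leq a_0(X)^2/(8(3B+2)^3)$, matching exactly the second term in the $\rho$ of the corollary. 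Thus the proof reduces to an audit of Balan's argument, verifying that every estimate there depends on the perturbation only through $\sigma$ and never through the individual norms $\|x_j - y_j\|$.

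To organize this audit, I would set $z_j := y_j - x_j$ and first note that $\sum_j |\ipt{u, z_j}|^2 \leq \sigma\norm{u}^2$ by Cauchy--Schwarz, a bound that uses only the summed perturbation norm. Combined with the upper frame bound $B$ of $(x_j)_{j=1}^m$ and the standard perturbation argument in Theorem \ref{T:C intro}, this already shows that $(y_j)_{j=1}^m$ is a frame. For the stability constant, I would expand
\[
|\ipt{x, y_j}|^2 - |\ipt{y, y_j}|^2 = \big(|\ipt{x, x_j}|^2 - |\ipt{y, x_j}|^2\big) + E_j(x,y),
\]
where the error $E_j(x,y)$ collects cross terms in which at least one inner product involves $z_j$. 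Using the polarization identity $|a|^2 - |b|^2 = \mathrm{Re}((a-b)\overline{(a+b)})$ and repeated Cauchy--Schwarz, each piece of $E_j$ admits an $\ell_2(\{1,\dots,m\})$-bound of the form $K(B)\sqrt{\sigma}$ times an appropriate phase-retrieval distance between $x$ and $y$, with $K(B) = O((3B+2)^{3/2})$ reflecting that the upper frame bound of $(y_j)$ is controlled by $B + O(\sqrt{\sigma})$.

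Combining this error estimate with Balan's lower bound $\sum_j ||\ipt{x, x_j}|^2 - |\ipt{y, x_j}|^2|^2 \geq a_0(X) q(x,y)$ from Lemma \ref{L:Balan} and the reverse triangle inequality in $\ell_2$ then yields $\sum_j ||\ipt{x, y_j}|^2 - |\ipt{y, y_j}|^2|^2 \geq (a_0(X)/2)\,q(x,y)$ whenever $\sqrt{\sigma}$ is sufficiently small; the threshold $\sigma < a_0(X)^2/(8(3B+2)^3)$ is calibrated for precisely this. The main subtlety, and the step most in need of care from Balan's original proof, is the $\ell_2$-bound on $E_j$: a naive Cauchy--Schwarz produces a bound in terms of $\norm{x-y}^2\norm{x+y}^2$ rather than $q(x,y)$, and closing this gap requires Balan's particular splitting of error terms together with the fact that $q(x,y)$ is comparable to the square of the phase-retrieval distance. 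Once one confirms that this splitting uses only the summed quantity $\sigma$ and never a per-coordinate norm $\norm{z_j}$, the corollary follows.
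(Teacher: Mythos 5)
Your overall strategy coincides with the paper's: the paper gives no separate proof of Corollary \ref{C:Balan} beyond the remark preceding it, namely that in Balan's proof of Theorem \ref{T:Balan} the condition $\max_j\|x_j-y_j\|<m^{-1/2}$ is used only to guarantee $\sigma:=\sum_{j=1}^m\|x_j-y_j\|^2\le 1$, so that this bound on $\sigma$ may be imposed directly as a hypothesis. One sentence of your setup is backwards, however: the pointwise bound $\max_j\|x_j-y_j\|\le a_0(X)/(2\sqrt2(3B+2)^{3/2})$ does \emph{not} yield $\sigma\le a_0(X)^2/(8(3B+2)^3)$ --- it only yields $\sigma\le m\,a_0(X)^2/(8(3B+2)^3)$ --- so the corollary's hypothesis is not a consequence of the theorem's. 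The implication that actually matters runs the other way: since $\max_j\|x_j-y_j\|^2\le\sigma$, the corollary's hypothesis $\sigma<a_0(X)^2/(8(3B+2)^3)$ \emph{implies} Balan's pointwise condition coming from the second term of his $\rho$, and together with $\sigma<1$ it therefore supplies everything Balan's proof consumes from its hypotheses. This makes the corollary an immediate consequence of Balan's argument and renders the full audit you propose in your last two paragraphs --- checking that every error estimate in Balan's proof can be expressed through $\sigma$ alone --- unnecessary; the only step that needs to be reexamined is the one using $m^{-1/2}$, exactly as the paper states. Your audit is not wrong as a fallback (and your decomposition of $|\langle x,y_j\rangle|^2-|\langle y,y_j\rangle|^2$ has the right shape if one wanted to reprove Balan's Lipschitz estimate from scratch), but as written it both overstates what must be verified and rests its calibration of $\rho$ on the reversed implication.
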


Thus, both our measurement of the stability of phase retrieval given in \eqref{E:Cstable} and the measurement given in \eqref{E:Balan} provide a similar perturbation theorem.  The goal of this section is to compare these two measurements of stability.  Note that if $(x_j)_{j=1}^m$ satisfies \eqref{E:Balan}, then plugging in $y=0$ gives that $a_0\|x\|^4\leq \sum_{j=1}^m|\langle x,x_j\rangle|^4$.  This provides a lower frame bound of $(x_j)_{j=1}^m$ being a $p$-frame for the value $p=4$.



\begin{defn}
    Let $X$ be a Banach space with dual $X^*$ and let $1\leq p<\infty$.  A countable family $(f_j)_{j\in J}\subseteq X^*$ is called a $p$-frame of $X$ with $p$-frame bounds $0<A\leq B$ if 
\begin{equation}
    A\|x\|_X\leq \Big(\sum_{j\in J}|f_j(x)|^p\Big)^{\frac{1}{p}}\leq B\|x\|_X\hspace{.5cm}\textrm{ for all }x\in X. 
\end{equation}
\end{defn}
Essentially, a $p$-frame bounds the norm of a vector in terms of the $p$-norm of the frame coefficients \cite{AST}\cite{CS}.  
  Banach frames \cite{AG}\cite{CCS}\cite{FG}\cite{G}
and associated spaces for Schauder frames 
\cite{BF}\cite{BFL}\cite{CDOSZ}\cite{L} extend this further and apply more general Banach sequence space norms to the frame coefficients.
Using $p$-norms and more general Banach lattice norms can be very useful when studying the stability of phase retrieval and are explicitly used in \cite{AG}\cite{B}\cite{CPT}\cite{FG}\cite{FOPT}.  For example, it is known that every bounded continuous frame for a separable Hilbert space $H$ may be sampled to obtain a discrete frame for $H$ \cite{FS}, and if $H$ is finite dimensional and the continuous frame satisfies a Nikol’skii inequality then the length of the sampled frame can be chosen on the order of the dimension of $H$ \cite{LT}.  However, if one wished to discretize a continuous frame to obtain a frame which does $C$-stable phase retrieval then it is necessary to control the bounds for discretizing both the $L_2$-norm and the $L_1$-norm on the range of the analysis operator \cite{FG}.  As an other example, all known constructions of frames which contain a number of vectors on the order of the dimension of the space and do $C$-stable phase retrieval   involve sampling sub-Gaussian random vectors \cite{CDFF}\cite{CL}\cite{EM}\cite{KL}\cite{KS}.  The property that a random vector $X=(x_t)_{t\in\Omega}$ in a Hilbert space $H$ is sub-Gaussian is equivalent to there existing a uniform constant $K>0$ so that $\|(\langle x,x_t\rangle)\|_{L_p(\Omega)}\leq K\sqrt{p}\|x\|$ for all $x\in H$ and $p\geq 1$, and this property is extremely useful in proving concentration inequalities in high dimensional Hilbert spaces \cite{V}.  Essentially, one can think of the  use of $p$-frame bounds in frame theory as corresponding to the use of higher-moment conditions in probability.
We now generalize the definition of a frame doing $C$-stable phase retrieval to a $p$-frame doing $C$-stable phase retrieval in $\ell_p(J)$.  
\begin{defn}
 Let $X$ be a Banach space with dual $X^*$ and let $1\leq p<\infty$.  Let $(f_j)_{j\in J}\subseteq X^*$ be a $p$-frame of $X$ with analysis operator $\Theta:X\rightarrow \ell_p(J)$ given by $\Theta(x)=(f_j(x))_{j\in J}$ for all $x\in X$.  We say that $(f_j)_{j\in J}$ does $C$-stable phase retrieval in $\ell_p(J)$ if 
\begin{equation}\label{E:Cstablep}
\min_{|\lambda|=1} \|x- \lambda y\|_X \leq C \big\||\Theta x|- |\Theta y|\big\|_{\ell_p(J)}=C\Big(\sum_{j\in J}\big||f_j(x)|-| f_j(y)|\big|^p\Big)^{\frac{1}{p}}\hspace{.15cm}\textrm{ for all }x,y\in X.
\end{equation}
\end{defn}
In other words, if we define an equivalence relation $\sim$ on $X$ by $x\sim y$ if and only if $x=\lambda y$ for some $|\lambda|=1$ then a $p$-frame $(f_j)_{j\in J}$ of $X$ with analysis operator $\Theta:X\rightarrow\ell_p(J)$ does $C$-stable phase retrieval in $\ell_p(J)$ means that the recovery of $[x]_\sim\in X/\!\!\sim$ from $|\Theta x|\in \ell_p(J)$ is a $C$-Lipschitz map from $|\Theta(X)|$ to $X/\!\!\sim$.  Lemma \ref{L:ortho} gives that if $(f_j)_{j\in J}$ is a $p$-frame for a Hilbert space $H$ then to prove that $(f_j)_{j\in J}$ does $C$-stable phase retrieval in $\ell_p(J)$ we only need to prove \eqref{E:Cstablep} for orthogonal vectors in $H$.  
Note that if $p>2$ then the $\ell_2(J)$-norm dominates the $\ell_p(J)$-norm and hence doing $C$-stable phase retrieval in $\ell_p(J)$ is a stronger condition than doing $C$-stable phase retrieval in $\ell_2(J)$.  The following proposition relates the  constant $a_0$ in \eqref{E:Balan} to the stability of phase retrieval in $\ell_4(J)$.

\begin{prop}\label{P:p}
Let $(x_j)_{j\in J}$ be a frame of a Hilbert space $H$ with Bessel bound $B$ which satisfies \eqref{E:Balan} for the value $a_0$.  Then 
$(x_j)_{j\in J}$ is a 4-frame of $H$ with lower $4$-frame bound $a_0^{1/4}$ and upper $4$-frame bound $B^{\frac{1}{2}}$.  Furthermore,  $(x_j)_{j\in J}$  does $(2a_0^{-1}B)^{\frac{1}{2}}$-stable phase retrieval in $\ell_4(J)$.
\end{prop}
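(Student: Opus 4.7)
The plan is to derive the 4-frame bounds first, then apply \eqref{E:Balan} together with Cauchy--Schwarz and Lemma \ref{L:ortho} to extract the stability constant.

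For the 4-frame bounds, the upper bound is immediate from $\|\cdot\|_{\ell_4(J)}\leq\|\cdot\|_{\ell_2(J)}$ combined with the Bessel bound $B$, which gives $\bigl(\sum_{j\in J}|\langle x,x_j\rangle|^4\bigr)^{1/4}\leq B^{1/2}\|x\|$. For the lower bound I would substitute $y=0$ into \eqref{E:Balan}; the right-hand side collapses to $a_0\|x\|^2\|x\|^2=a_0\|x\|^4$, yielding $\bigl(\sum_{j\in J}|\langle x,x_j\rangle|^4\bigr)^{1/4}\geq a_0^{1/4}\|x\|$.

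For the stability claim, I would first invoke Lemma \ref{L:ortho} to reduce to orthogonal pairs $x,y\in H$. For such pairs $\|x-y\|^2=\|x+y\|^2=\|x\|^2+\|y\|^2$ and the imaginary part of $\langle x,y\rangle$ vanishes, so the right-hand side of \eqref{E:Balan} simplifies to $a_0(\|x\|^2+\|y\|^2)^2$. The algebraic identity $\bigl| |\langle x,x_j\rangle|^2-|\langle y,x_j\rangle|^2 \bigr|^2 = \bigl(|\langle x,x_j\rangle|-|\langle y,x_j\rangle|\bigr)^2\bigl(|\langle x,x_j\rangle|+|\langle y,x_j\rangle|\bigr)^2$ then separates the target $\ell_4$-term from a controllable factor, and Cauchy--Schwarz gives
\begin{equation*}
\sum_j \bigl| |\langle x,x_j\rangle|^2-|\langle y,x_j\rangle|^2 \bigr|^2 \leq \Bigl(\sum_j \bigl| |\langle x,x_j\rangle|-|\langle y,x_j\rangle| \bigr|^4\Bigr)^{1/2}\Bigl(\sum_j \bigl(|\langle x,x_j\rangle|+|\langle y,x_j\rangle|\bigr)^4\Bigr)^{1/2}.
\end{equation*}

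The main work is then bounding the second factor. I would use $(a+b)^2\leq 2(a^2+b^2)$ to get $(|\langle x,x_j\rangle|+|\langle y,x_j\rangle|)^4\leq 4(|\langle x,x_j\rangle|^2+|\langle y,x_j\rangle|^2)^2$, then expand and control the cross term $\sum_j|\langle x,x_j\rangle|^2|\langle y,x_j\rangle|^2$ by Cauchy--Schwarz and the upper 4-frame bound $B^{1/2}$. This gives $\sum_j(|\langle x,x_j\rangle|^2+|\langle y,x_j\rangle|^2)^2\leq B^2(\|x\|^2+\|y\|^2)^2$, so the second factor above is at most $2B(\|x\|^2+\|y\|^2)$. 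Combining with \eqref{E:Balan}, cancelling one power of $(\|x\|^2+\|y\|^2)$, and taking a square root yields the constant $(2a_0^{-1}B)^{1/2}$.

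The main obstacle is keeping the constant sharp: the cruder bound $(a+b)^4\leq 8(a^4+b^4)$ would yield a factor of $2\sqrt{2}$ instead of $2$ and hence a suboptimal stability constant $2^{3/4}(B/a_0)^{1/2}$. Isolating the cross term $\sum_j|\langle x,x_j\rangle|^2|\langle y,x_j\rangle|^2$ and estimating it separately by Cauchy--Schwarz is what recovers the proposition's exact constant.
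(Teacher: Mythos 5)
Your proposal is correct and follows essentially the same route as the paper: reduce to orthogonal pairs via Lemma \ref{L:ortho}, factor $\big||\langle x,x_j\rangle|^2-|\langle y,x_j\rangle|^2\big|^2$ as a product of the difference and sum terms, apply Cauchy--Schwarz, and bound the resulting factor by $2B(\|x\|^2+\|y\|^2)=2B\|x+y\|^2$. The only (cosmetic) difference is in that last estimate: rather than your cross-term Cauchy--Schwarz in $\ell_4$ with the upper $4$-frame bound, the paper uses the elementary inequality $\big(\sum_j c_j^2\big)^{1/2}\leq \sum_j c_j$ for $c_j\geq 0$ to reduce to $\sum_j\big(|\langle x,x_j\rangle|+|\langle y,x_j\rangle|\big)^2\leq 2B(\|x\|^2+\|y\|^2)$ via the Bessel bound, reaching the same constant $2B$ with slightly less work.
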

\begin{proof}
   Let $x\in H$. By plugging $y=0$ into \eqref{E:Balan}, we have that
    $$a_0^{1/4}\|x\|\leq \Big(\sum|\langle x,x_j\rangle|^4 \Big)^{1/4}\leq \Big(\sum|\langle x,x_j\rangle|^2 \Big)^{\frac{1}{2}}\leq B^{\frac{1}{2}}\|x\|
    $$
Thus, $(x_j)_{j\in J}$ is a $4$-frame with lower $4$-frame bound $a_0^{1/4}$ and upper $4$-frame bound $B^{\frac{1}{2}}$.  By Lemma \ref{L:ortho}, to prove that $(x_j)_{j\in J}$ does $C$-stable phase retrieval in $\ell_4(J)$ for $C=(2a_0^{-1}B)^{\frac{1}{2}}$, we only need to show that the inequality \eqref{E:Cstablep} holds for orthogonal vectors.  We now fix $x,y\in H$ with $\langle x,y\rangle=0$.  
\begin{align*}
a_0 &\|x+y\|^4\leq  \sum_{j\in J}\big||\langle x, x_j\rangle|^2-|\langle y,x_j\rangle|^2\big|^2 \hspace{1cm}\textrm{ as }\langle x,y\rangle=0,\\
&=\sum_{j\in J}\big||\langle x, x_j\rangle|-|\langle y,x_j\rangle|\big|^2 \big||\langle x, x_j\rangle|+|\langle y,x_j\rangle|\big|^2\\
&\leq \Big(\sum_{j\in J}\big||\langle x, x_j\rangle|-|\langle y,x_j\rangle|\big|^4\Big)^{\frac{1}{2}} \Big(\sum_{j\in J}\big||\langle x, x_j\rangle|+|\langle y,x_j\rangle|\big|^4\Big)^{\frac{1}{2}}
\hspace{.5cm}\textrm{ by Cauchy-Schwarz,}\\
&\leq \Big(\sum_{j\in J}\big||\langle x, x_j\rangle|-|\langle y,x_j\rangle|\big|^4\Big)^{\frac{1}{2}} \sum_{j\in J}\big||\langle x, x_j\rangle|+|\langle y,x_j\rangle|\big|^2\\
&\leq \Big(\sum_{j\in J}\big||\langle x, x_j\rangle|-|\langle y,x_j\rangle|\big|^4\Big)^{\frac{1}{2}} \sum_{j\in J}2\big(|\langle x, x_j\rangle|^2+|\langle y,x_j\rangle|^2\big)\\
&\leq \Big(\sum_{j\in J}\big||\langle x, x_j\rangle|-|\langle y,x_j\rangle|\big|^4\Big)^{\frac{1}{2}} 2B\big(\|x\|^2+\|y\|^2\big)\\
&=2B\|x+y\|^2 \Big(\sum_{j\in J}\big||\langle x, x_j\rangle|-|\langle y,x_j\rangle|\big|^4\Big)^{\frac{1}{2}} \hspace{1cm}\textrm{ as }\langle x,y\rangle=0.
\end{align*}
Thus, we have that 

$$\min_{|\lambda|=1}\|x-\lambda y\|=\|x+y\|\leq (2a_0^{-1}B)^{\frac{1}{2}}\Big(\sum_{j=1}^m\big||\langle x, x_j\rangle|-|\langle y,x_j\rangle|\big|^4\Big)^{1/4}. $$
This proves that the frame $(x_j)_{j=1}^m$ does $(2a_0^{-1}B)^{\frac{1}{2}}$-stable phase retrieval in $\ell_4^m$.
\end{proof}

It follows from classical results in Banach spaces that  for all $2<p<\infty$ there is a universal constant $K>0$ so that if $(x_j)_{j=1}^m$ is a Parseval frame for an $n$-dimensional Hilbert space with lower $p$-frame bound $A_p$ and upper $p$-frame bound $B_p$ then $A_p n^{\frac{1}{2}}\leq KB_p m^{\frac{1}{p}}$ \cite{FLM}.  By Proposition \ref{P:p}, if  $a_0$ satisfies \eqref{E:Balan} then $a_0\leq K^4 mn^{-2}$.  This gives a situation where the lower $4$-frame bound is necessarily small and hence $a_0$ is small as well.  However, the value $a_0$ can be small for other reasons independent of the $4$-frame bounds.  In the following example we show that it is possible to construct Parseval frames for even the 2-dimensional Hilbert space $\C^2$ which do uniformly stable phase retrieval, have uniform 4-frame bounds, but $a_0$ is arbitrarily small.

\begin{ex}\label{E:ex}
Let $k\in\N$ and $p>2$.  Consider the frame $(x_j)_{j\in J}$ of $\C^2$ defined by
$$(x_j)_{j\in J}:=\big((k^{-\frac{1}{2}},0)\big)_{j=1}^k\sqcup\big((0,k^{-\frac{1}{2}})\big)_{j=1}^{k}\sqcup \big((1,1),(1,-1),(1,i),(1,-i)\big).$$
Then the following are all satisfied.
\begin{enumerate}
\item $(x_j)_{j\in J}$  is a tight frame of $\C^2$ with frame bound $5$.
\item $(x_j)_{j\in J}$ has upper $p$-frame bound $5^{1/2}$ and lower $p$-frame bound $1$.
\item $(x_j)_{j\in J}$ does $C_2$-stable phase retrieval in $\ell_2(J)$ for some $C_2$ independent of $k$.
\item If $C_p<(2k)^{\frac12-\frac1p}$ then $(x_j)_{j\in J}$ does not do $C_p$-stable phase retrieval in $\ell_p(J)$.
\end{enumerate}
In particular, for all $a_0>0$ if $k\in\N$ is chosen large enough then $(x_j)_{j\in J}$ does not satisfy \eqref{E:Balan} for that choice of $a_0$.

\end{ex}

\begin{proof}

We first consider the case $k=1$ and denote $F:=\{(1,0),(0,1),(1,1),(1,-1),(1,i),(1,-i)\}$. A direct calculation gives that $F$ is a tight frame of $\C^2$ with frame bound $5$.  
We now show that this frame does phase retrieval in $\C^2$.  Let $\Theta$ be the analysis  operator of $F$.  Let $(a,b)\in\C^2$ with $(a,b)\neq (0,0)$.  By scaling, we may assume without loss of generality that $a\in\R$. 
By Lemma \ref{L:ortho} we need to show for all $c\geq 0$ that $|\Theta(a,b)|\neq |\Theta(c\overline{b},-ca)|$.  For the sake of contradiction, we assume that $|\Theta(a,b)|= |\Theta(c\overline{b},-ca)|$.

As, $|\langle (a,b),(1,0)\rangle|=|\langle (c\overline{b},-ca),(1,0)\rangle|$ we have that $|a|=|cb|$.  Likewise, $|b|=|ca|$.  Thus, $c=1$ and $|a|=|b|$.

As, $|\langle (a,b),(1,1)\rangle|=|\langle (\overline{b},-a),(1,1)\rangle|$ 
we have that $Re(b)=0$.  Likewise, $|\langle (a,b),(1,i)\rangle|=|\langle (\overline{b},-a),(1,i)\rangle|$ 
implies that $Im(b)=0$.  Hence $b=0$.  This however contradicts that $|a|=|b|$.

We now have that the frame $F$ does phase retrieval.  Every frame which does phase retrieval for a finite dimensional Hilbert space does stable phase retrieval.  Thus, there exists $C_2>0$ so that $F$ does $C_2$-stable phase retrieval.  Let $p>2$.  As it is a frame, there exists  $A_p>0$ so that $\{(1,1),(1,-1),(1,i),(1,-i)\}$ has lower $p$-frame bound $A_p$.  

We now let $k\in\N$ and consider the frame 
$$(x_j)_{j\in J}:=\big((k^{-\frac{1}{2}},0)\big)_{j=1}^k\sqcup\big((0,k^{-\frac{1}{2}})\big)_{j=1}^{k}\sqcup \big((1,1),(1,-1),(1,i),(1,-i)\big).$$
That is, $(x_j)_{j\in J}$ can be thought of replacing the vectors $(1,0)$ and $(0,1)$ in $F$ with $k$ copies of $(k^{-\frac{1}{2}},0)$ and $(0,k^{-\frac{1}{2}})$ respectively.  This will preserve all the frame properties  which are measured in $\ell_2$.  In particular, $(x_j)_{j\in J}$ will be a tight frame with frame bound $5$ and will do $C_2$-stable phase retrieval in $\ell_2(J)$.  As, $\{(1,1),(1,-1),(1,i),(1,-i)\}$ is a subset of $(x_j)_{j\in J}$, we have that $A_p$ is a lower $p$-frame bound of $(x_j)_{j\in J}$.  As $p>2$, we have that $5$ is an upper $p$-frame bound of  $(x_j)_{j\in J}$.  We now check the stability of phase retrieval of $(x_j)_{j\in J}$ in $\ell_p(J)$.  We consider the orthogonal unit vectors $(1,0),(0,1)\in\C^2$.  
$$
\sum_{j\in J}\big||\langle (1,0),x_j\rangle|-|\langle (0,1),x_j\rangle|\big|^p=\sum_{j=1}^k (k^{-\frac{1}{2}})^p+\sum_{j=1}^k (k^{-\frac{1}{2}})^p+0+0+0+0=2k^{1-p/2}
$$
Not that for $x=(1,0)$ and $y=(0,1)$ we have that $\min_{|\lambda|=1}\|x-\lambda y\|=\sqrt{2}$ and hence
$$
\min_{|\lambda|=1}\|x-\lambda y\|=2^{\frac{1}{2}-\frac{1}{p}}k^{\frac{1}{2}-\frac{1}{p}}\Big(\sum_{j\in J}\big||\langle x,x_j\rangle|-|\langle y,x_j\rangle|\big|^p\Big)^{\frac{1}{p}}.
$$
As $p>2$, we have that the stability of $(x_j)_{j\in J}$ doing phase retrieval in $\ell_p(J)$ can be forced to be arbitrarily large.  That is, if $C_p>0$ and $k\in\N$ is chosen large enough so that $C_p<2^{\frac{1}{2}-\frac{1}{p}}k^{\frac{1}{2}-\frac{1}{p}}$ then $(x_j)_{j\in J}$ does not do $C_p$-stable phase retrieval in $\ell_p(J)$.
\end{proof}

We now show that the idea used in Example \ref{E:ex} will work for any $n$-dimensional Hilbert space with $n>1$.  

\begin{prop}
 There exists a uniform constant $C_2>0$ so that for all $p>2$, all $n\geq 2$, and all $C_p>0$ there exists a frame $(x_j)_{j\in J}$ of $\C^n$ so that
\begin{enumerate}
\item $(x_j)_{j\in J}$ is a tight frame of $\C^n$ with frame bound $2$.
\item $(x_j)_{j\in J}$ has upper $p$-frame bound $2^{1/2}$ and lower $p$-frame bound $n^{\frac1p-\frac12}$.
\item $(x_j)_{j\in J}$ does $C_2$-stable phase retrieval in $\ell_2(J)$.
\item $(x_j)_{j\in J}$ does not do $C_p$-stable phase retrieval in $\ell_p(J)$.
\end{enumerate}
\end{prop}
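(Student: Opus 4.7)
The plan is to mimic Example \ref{E:ex} in $\C^n$. Fix $n \geq 2$, $p > 2$, and $C_p > 0$, and consider the frame
\[
(x_j)_{j \in J} := \bigsqcup_{i=1}^n \bigl(k^{-1/2} e_i\bigr)_{\ell = 1}^k \sqcup H,
\]
where $k \in \N$ will be chosen large (depending on $n,p,C_p$) and $H = (h_l)_{l \in L}$ is a Parseval frame of $\C^n$ that does $C$-stable phase retrieval in $\ell_2(L)$ for a universal constant $C > 0$ \emph{and} has the additional property that each $h_l$ is ``flat'' in the standard basis, meaning $|\langle e_i, h_l\rangle|$ is independent of $i$. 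Such a frame should be produced by the random sub-Gaussian constructions of \cite{CDFF,CL,EM,KL,KS}: sampling the entries of each $h_l$ as $m^{-1/2}$ times a random phase drawn from $\{\pm 1\}$ or $\{\pm 1,\pm i\}$ gives, with high probability, a flat Parseval frame doing phase retrieval with a stability constant that does not depend on $n$.

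Granting such an $H$, items (1) and (3) would be immediate. Tightness follows from the computation
\[
\sum_{j \in J}|\langle x, x_j\rangle|^2 = \sum_{i=1}^n k\cdot k^{-1}|x_i|^2 + \sum_{l\in L}|\langle x, h_l\rangle|^2 = \|x\|^2 + \|x\|^2 = 2\|x\|^2,
\]
so the frame bound is $2$. Stability in $\ell_2$ is inherited directly from $H$: since $H$ is a sub-multiset of $(x_j)_{j\in J}$ that already does $C$-stable phase retrieval, appending the basis copies only enlarges the right-hand side of \eqref{D:Cstable2}, and we may set $C_2 := C$.

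For item (4), I would evaluate the stability inequality at $x = e_1, y = e_2$. Flatness of $H$ forces $|\langle e_1, h_l\rangle| = |\langle e_2, h_l\rangle|$ for every $l$, so $H$ contributes $0$ to $\||\Theta x|-|\Theta y|\|_{\ell_p}$. The basis part contributes $2k^{1-p/2}$, and therefore
\[
\bigl\||\Theta x|-|\Theta y|\bigr\|_{\ell_p} = 2^{1/p} k^{1/p - 1/2}, \qquad \min_{|\lambda|=1}\|x-\lambda y\| = \sqrt{2},
\]
so any stability constant must be at least $2^{1/2 - 1/p} k^{1/2 - 1/p}$, which exceeds $C_p$ once $k$ is taken large enough. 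For item (2), the upper $p$-frame bound $\sqrt 2$ is automatic from $\|\cdot\|_{\ell_p(J)} \leq \|\cdot\|_{\ell_2(J)}$ (since $p > 2$) and tightness. The lower bound $n^{1/p-1/2}$ would be obtained by combining the basis contribution $\|\Theta_{\mathrm{basis}} x\|_p = k^{1/p-1/2}\|x\|_p \geq k^{1/p-1/2} n^{1/p-1/2}\|x\|$ with the $H$-contribution, which by the power-mean inequality on the Parseval frame $H$ is at least $|L|^{1/p-1/2}\|x\|$; tuning the parameters of the construction, the total should reach $n^{1/p-1/2}\|x\|$.

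The hard part will be the explicit construction of the flat Parseval phase retrieval frame $H$ with a uniform-in-$n$ stability constant: flatness is precisely what renders $H$ invisible to the $\ell_p$-error test at the pair $(e_1,e_2)$, but it is a strong constraint to impose simultaneously with uniform PR-stability, so a concentration argument for random flat-entry frames, or an explicit Hadamard-type construction, must be invoked. A secondary obstacle, essentially bookkeeping, is pinning down the lower $p$-frame bound to exactly $n^{1/p-1/2}$ once $k$ has been driven to infinity, since at that point the basis contribution alone is no longer enough and one must use the $H$-contribution carefully.
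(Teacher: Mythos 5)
Your construction hinges on the existence of a ``flat'' Parseval frame $H=(h_l)_{l\in L}$ of $\C^n$ that by itself does $C$-stable phase retrieval with $C$ independent of $n$, where flatness means $|\langle e_i,h_l\rangle|$ does not depend on $i$. Such a frame cannot exist: flatness forces $|\langle e_1,h_l\rangle|=|\langle e_2,h_l\rangle|$ for every $l$, so $H$ assigns identical coefficient magnitudes to $e_1$ and $e_2$, which are not unimodular multiples of one another; hence $H$ does not do phase retrieval at all, stably or otherwise. This is not a technical obstacle to be overcome by a cleverer random construction --- the flatness you need for item (4) (to make $H$ invisible at the pair $(e_1,e_2)$) is in direct logical conflict with the stability you need for item (3) (which you propose to inherit from $H$ alone as a sub-multiset). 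One could instead try to show that the \emph{combined} frame, diluted basis plus flat $H$, does uniformly stable phase retrieval, but your sketch does not address this and it is a substantially harder question. A secondary gap is item (2): once the basis is diluted into $k$ copies of $k^{-1/2}e_i$, its contribution to the lower $p$-frame bound is $k^{1/p-1/2}\|x\|_{\ell_p^n}\to 0$ as $k\to\infty$, and the power-mean bound on $H$ only yields $|L|^{1/p-1/2}\|x\|\le n^{1/p-1/2}\|x\|$ (since $|L|\ge n$), so the stated constant $n^{1/p-1/2}$ is not recovered.

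The paper runs the dilution the other way around, and that is the idea you are missing. Keep the unit vector basis $(e_i)_{i=1}^n$ intact --- this alone gives the lower $p$-frame bound $n^{1/p-1/2}$, since $\bigl(\sum_i|\langle x,e_i\rangle|^p\bigr)^{1/p}=\|x\|_{\ell_p^n}\ge n^{1/p-1/2}\|x\|$ --- and adjoin $k$ copies of $(k^{-1/2}z_j)_{j\in I}$, where $(z_j)_{j\in I}$ is a Parseval frame of $\C^n$ doing $C_2$-stable phase retrieval with $C_2$ uniform in $n$ \cite{KS}. The replacement of $(z_j)$ by $k$ copies of $(k^{-1/2}z_j)$ changes no $\ell_2$ quantity, so tightness with bound $2$ and the $C_2$-stability in $\ell_2(J)$ are preserved with no extra hypotheses on $(z_j)$. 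For item (4) one tests at $x=2^{-1}(e_1+e_2)$ and $y=2^{-1}(e_1-e_2)$: the basis coefficients of $x$ and $y$ have equal moduli \emph{automatically}, with no flatness assumption, while the diluted part contributes at most $k\cdot k^{-p/2}\sum_{j\in I}|\langle x-y,z_j\rangle|^p\le k^{1-p/2}$, so any $\ell_p$-stability constant must be at least $k^{1/2-1/p}\to\infty$. You should restructure your argument along these lines.
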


\begin{proof}
There is a uniform constant $C_2>0$ so that for all $n\in\N$ there exists a Parseval frame $(z_j)_{j\in I}$ of $\C^n$ which does $C_2$-stable phase retrieval \cite{KS}.  Let $(e_j)_{j=1}^n$ be the unit vector basis for $\C^n$.  We have that $(e_j)_{j=1}^n\sqcup (z_j)_{j\in I}$ is a tight frame of $\C^n$ with frame bound $2$ and hence for all $p\geq 2$, $(e_j)_{j=1}^n\sqcup (z_j)_{j\in I}$ has upper $p$-frame bound $2^{1/2}$.  As it contains the unit vector basis for $\C^n$, $(e_j)_{j=1}^n\sqcup (z_j)_{j\in I}$ has lower $p$-frame bound $n^{\frac1p-\frac12}$.

We now let $k\in\N$ and consider the frame 
$(x_j)_{j\in J}$ which consists of $(e_j)_{j=1}^n$ and $k$ copies of $(k^{-\frac{1}{2}} z_j)_{j\in I}$.   We have that $(x_j)_{j\in J}$ will preserve all the frame properties of $(e_j)_{j=1}^n\sqcup (z_j)_{j\in I}$  which are measured in $\ell_2$.  In particular, $(x_j)_{j\in J}$ is a tight frame of $\C^n$ with frame bound $2$
and does $C_2$-stable phase retrieval in $\ell_2(J)$.  Furthermore, as 
$(x_j)_{j\in J}$ contains the unit vector basis of $\C^n$, we have that $(x_j)_{j\in J}$ has lower $p$-frame bound $n^{\frac1p-\frac12}$.

We now consider the orthogonal vectors $x=2^{-1}(e_1+e_2)$ and $y=2^{-1}(e_1-e_2)$.  

\begin{align*}
\sum_{j\in J}\big||\langle x,x_j\rangle|-|\langle y,x_j\rangle|\big|^p&=\sum_{j=1}^n\big||\langle x,e_j\rangle|-|\langle y,e_j\rangle|\big|^p+ k\sum_{j\in I}\big||\langle x,k^{-\frac12}z_j\rangle|-|\langle y,k^{-\frac12}z_j\rangle|\big|^p\\
&\leq 0+k^{1-\frac{p}{2}}\sum_{j\in I}|\langle x-y,z_j\rangle|^p\\
&\leq k^{1-\frac{p}{2}}\Big(\sum_{j\in I}|\langle e_2,z_j\rangle|^2\Big)^{p/2}=k^{1-\frac{p}{2}}
\end{align*}
Thus, we have that
$$\min_{|\lambda|=1}\|x-\lambda y\|=1\geq k^{\frac12-\frac{1}{p}}\Big(\sum_{j\in J}\big||\langle x,x_j\rangle|-|\langle y,x_j\rangle|\big|^p\Big)^\frac{1}{p}
$$
Hence, if $C_p$ is any constant then we may choose $k\in\N$ large enough so that $(x_j)_{j\in J}$ does not do $C_p$-stable phase retrieval in $\ell_p(J)$. 
\end{proof}

\end{document}